\numberwithin{equation}{section}
\theoremstyle{plain}
\newtheorem{theorem}{Theorem}[section]
\newtheorem{proposition}[theorem]{Proposition}
\newtheorem{lemma}[theorem]{Lemma}
\newtheorem{corollary}[theorem]{Corollary}
\newtheorem{definition}[theorem]{Definition}
\newtheorem{example}[theorem]{Example}
\newtheorem{remark}[theorem]{Remark}
\newenvironment{proof}{{\noindent \textbf{Proof}\,\,}}{\hspace*{\fill}$\Box$\medskip}
\def\wt#1{\widetilde#1}
\def\rr{\mathbb R}
\def\hh{\mathbb H}
\def\mcr{\mathcal R}
 \def\rp{\mathbb{RP}}
\def\La{\Lambda}
\def\mcr{\mathcal R}
\def\mcn{\mathcal N}
\def\Xi{\mathcal Z}
\def\mcl{\mathcal L}
\def\wh#1{\widehat#1}
\title{On Hamiltonian projective billiards on boundaries of products of convex bodies}
\author{Alexey Glutsyuk\thanks{CNRS, UMR 5669 (UMPA, ENS de Lyon), France. E-mail: 
aglutsyu@ens-lyon.fr} \thanks{HSE University, Moscow, Russia} \thanks{Higher School of Modern Mathematics MIPT, 1 Klimentovskiy per., Moscow, Russia}}
\begin{document}
\maketitle
\vspace{-0.3cm}
{\it To Dmitry Valerievich Treschev  on the occasion of his 60-th  birthday.}

{\it To  Sergei Vladimirovich Bolotin  on the occasion of his 70-th  birthday.}
\begin{abstract} 
Let $K\subset\rr^n_q$, $T\subset\rr^n_p$ be two bounded strictly convex bodies (open subsets) with $C^6$-smooth boundaries. We consider the product $\overline K\times\overline T\subset\rr^{2n}_{q,p}$ equipped with the standard symplectic form $\omega=\sum_{j=1}^ndq_j\wedge dp_j$. The  {\it $(K,T)$-billiard orbits} are continuous curves in 
the boundary $\partial(K\times T)$ whose intersections with the open dense subset 
$(K\times\partial T)\cup(\partial K\times T)$ are tangent to the  characteristic line field given by 
kernels of the restrictions of the symplectic form $\omega$ to the tangent spaces to the boundary. 
For every $(q,p)\in K\times \partial T$ the characteristic line  in $T_{(q,p)}\rr^{2n}$ is directed by the vector 
$(\vec n(p),0)$, where $\vec n(p)$ is the exterior normal to $T_p\partial T$, and similar statement holds for $(q,p)\in\partial K\times T$. The projection of each $(K,T)$-billiard orbit to $K$ is 
an orbit of the so-called $T$-billiard in $K$. In the case, when $T$ is centrally-symmetric, this  is the billiard in $\rr^n_q$ equipped with  Minkowski Finsler structure 
"dual to $T$", with Finsler reflection law introduced in a joint paper by S.Tabachnikov and E.Gutkin in 2002.  Studying $(K,T)$-billiard orbits is closely related to  C.Viterbo's Symplectic Isoperimetric Conjecture (recently disproved by P.Haim-Kislev and Y.Ostrover) and the famous Mahler Conjecture in convex geometry. 
We study the special case, when the $T$-billiard reflection law is the projective  law introduced by S.Tabachnikov, i.e., given by projective involutions of the projectivized tangent spaces $T_q\rr^n$, $q\in\partial K$. We show that this happens, if and only if  $T$ is an ellipsoid, or equivalently, if all the $T$-billiards are simultaneously affine equivalent to  Euclidean billiards.
As an application, we deduce analogous results for Finsler billiards.

\end{abstract}
\tableofcontents

\section{Finsler and $(K,T)$-billiards with  projective reflection law. Introduction and main results}

Let $K\subset\rr^n_{q_1,\dots,q_n}$ and $T\subset\rr^n_{p_1,\dots,p_n}$ be two bounded 
strictly convex open subsets  with $C^6$-smooth boundaries. We consider their 
product $K\times T$ in the standard symplectic space $\rr^{2n}$ equipped with the symplectic form 
$\omega:=\sum_{j=1}^ndq_j\wedge dp_j$. The open and dense subset $(K\times\partial T)\cup(\partial K\times T)\subset\partial(K\times T)$ carries the characteristic line field: the field of kernels of the restriction to 
$\partial(K\times T)$ of the symplectic form $\omega$. It is well-known that the 
characteristic line field is directed by the vectors $(\vec n(p),0)\in T_{q,p}\rr^2$ for $(q,p)\in K\times\partial T$,  respectively, $(0,-\vec n(q))$ for $(q,p)\in \partial K\times T$, where $\vec n(q)$, $\vec n(p)$ are the exterior normal vector fields on $\partial K$ and $\partial T$ respectively:  see \cite{ako1, ao2} and the next paragraph. 
The above vectors induce the canonical orientation of the characteristic line field that can be also defined as follows. Let $H(p,q)$ be a non-negative Hamiltonian function on $\rr^{2n}$ such that $K\times T=\{ H<1\}$. We consider that $H$ satisfies the following additional condition:

{\it -  for every fixed $p\in T$  the restriction to 
$\overline K\times\{ p\}$ of the function $H$ has non-zero gradient at the points of the boundary 
$\partial K\times\{ p\}$, and the same holds for $K$ interchanged with $T$.} 

The  corresponding Hamiltonian vector field is tangent to the boundary $\partial(K\times T)$ and directs the above characteristic line field. Its restriction to 
$K\times \partial T$ coincides with the field $(\frac{\partial H}{\partial p},0)$, which is clearly co-directed with  the field $(\vec n(p),0)$. Similarly, its restriction to 
$\partial K\times  T$ coincides with the field $(0, -\frac{\partial H}{\partial q})$, which is  co-directed with  the field $(0,-\vec n(q))$.

\begin{definition} 
A continuous oriented curve  in $\partial(K\times T)$ whose intersection with the union $(K\times\partial T)
\cup(\partial K\times T)$  is tangent and co-directed with the above characteristic line field  
is called an {\it orbit of the $(K,T)$-billiard.}
\end{definition}

It is well-known, see \cite{ako1, ao2}, that  the dynamics of the above $(K,T)$-billiard (or equivalently, of 
the above Hamiltonian flow) is indeed 
"billiard-like". Namely, the Hamiltonian flow moves each point $(q,p_0)\in K\times\partial T$ so that $q$ moves with a velocity parallel and co-directed with 
the exterior normal vector $\vec n(p_0)$ to $\partial T$ at $p_0$. Let $L_0$ denote the 
movement line of the point $q$, oriented by the velocity. As $q$ reaches the boundary $\partial K$, say, at some point $q_1$, the point $p$ starts to move with  a velocity parallel and co-directed with the interior normal vector to $\partial K$ at $q_1$, and $q_1$ remains fixed. This latter movement continues until $p$ hits the boundary $\partial T$, say, at some point $p_1$. Afterwards $q=q_1$ starts 
to move  with  a velocity  parallel and co-directed to the exterior normal to $\partial T$ at 
$p_1$, etc. Let $L_1$ denote the oriented line of the latter movement of the point $q$. The line $L_0$ is directed out of $K$ at $q_1$, and $L_1$ is directed inside $K$ at $q_1$. See Figure 1. 
The {\it reflection map} 
$$R_{K,q_1}:L_0\mapsto L_1$$
 is a diffeomorphism from the space of oriented lines through $q_1$ directed outside $K$ at $q_1$ 
 to the space of oriented lines through $q_1$ directed inside $K$ at $q_1$. 
 The family of thus defined reflection maps $R_{K,q_1}$ parametrized by the bouncing point $q_1$ will be briefly denoted $R_K$. For every $q_1\in\partial K$ the map $R_{K,q_1}$  is the restriction to the outward directed lines of a diffeomorphic involution $S^{n-1}\to S^{n-1}$ of the space of oriented lines through $q_1$ 
 (identified with the sphere $S^{n-1}$) that fixes the lines lying in the tangent plane $T_{q_1}\partial K$. 
 The projection $\pi_q:\rr^n_q\times\rr^n_p\to\rr^n_q$ sends each $(K,T)$-billiard orbit to an orbit of the so-called 
 $T$-billiard in $K$, see \cite{ako1, ao2, gt} and the next definition. 
 
 \begin{definition} \label{deftbil} 
 Let $K,T\subset\rr^n$ be bounded strictly convex bodies. The {\it $T$-billiard} in $K$ is the dynamical system  on the space of  oriented lines $L$ intersecting $K$ defined as follows. For every $L$ take 
 its last point $q$ of intersection with the boundary $\partial K$ in the sense of orientation. 
 The {\it image of the line $L$ under the 
 $T$-billiard map} is its image under the reflection map $R_{K,q}$. 
 \end{definition}
 
 \begin{figure}[ht]
  \begin{center}
   \epsfig{file=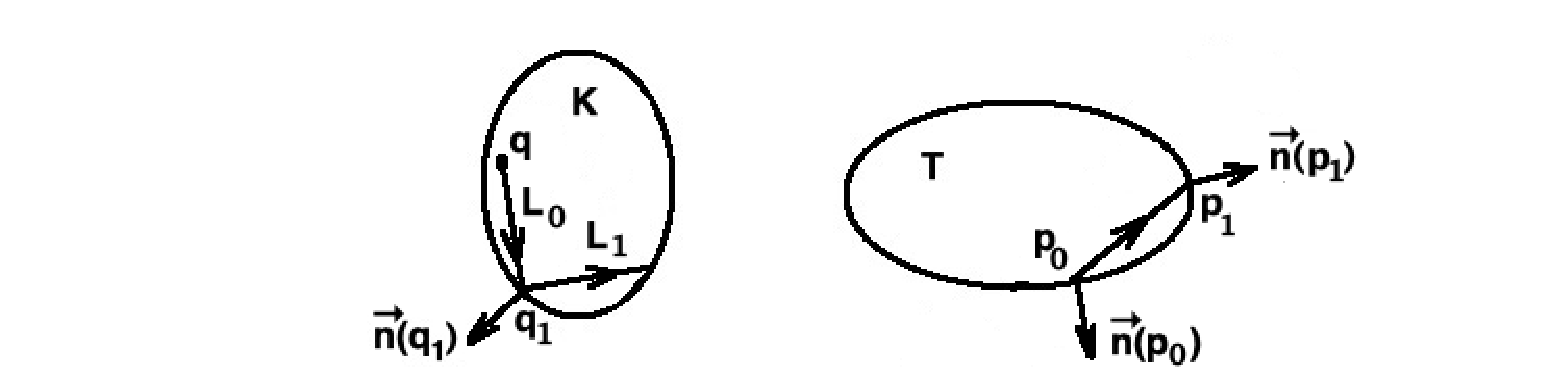, width=35em}
    \caption{The $(K,T)$-billiard orbits.}
    \label{fig:0}
  \end{center}
\end{figure}

\begin{remark} In the case, when the body $T$ is centrally-symmetric, the $T$-billiard coincides with the Minkowski Finsler billiard introduced in a joint paper by E.Gutkin and S.Tabachnikov \cite{gt} where the Finsler unit ball is  the dual to the body $T$. See the corresponding material in Subsection 1.2. 
\end{remark}

The next proposition is a well-known fact, see \cite[p. 2011]{ako1}, which follows from definition. 

\begin{proposition} \label{pro-ball} Let $T$ be a ball. Then the $T$-billiard is the usual Euclidean billiard. 
Namely, for every $q\in\partial K$ the reflection $R_{K,q}$  is the standard reflection in the billiard in  $K$ at $q$ defined by the usual reflection law: the angle of incidence is equal to the angle of reflection. 
\end{proposition}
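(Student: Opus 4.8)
The plan is to read off the reflection law $R_{K,q_1}$ directly from the billiard mechanism described just after Definition~\ref{deftbil}, and then specialize the geometry of $\partial T$ to that of a sphere. Fix a bouncing point $q_1\in\partial K$ and write $\nu:=\vec n(q_1)$ for the exterior unit normal to $\partial K$ at $q_1$. According to the mechanism, an incoming oriented line $L_0$ through $q_1$ is the movement line of $q$ when $(q,p_0)\in K\times\partial T$, so its direction is the exterior normal $v_0:=\vec n(p_0)$ to $\partial T$ at some $p_0\in\partial T$. The flow then moves $p$ from $p_0$ along the interior normal direction $-\nu$ of $\partial K$ until it meets $\partial T$ again at a point $p_1$, and the outgoing line $L_1$ has direction $v_1:=\vec n(p_1)$. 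Thus the whole content of $R_{K,q_1}$ is encoded in the correspondence $v_0\mapsto v_1$ subject to the single geometric constraint that the chord $[p_0,p_1]\subset\overline T$ be parallel to $\nu$.

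Now specialize: let $T$ be the ball of radius $r$ centered at $c$. Then the exterior unit normal at a boundary point $p$ is radial, $\vec n(p)=(p-c)/r$, so that $v_0=(p_0-c)/r$ and $v_1=(p_1-c)/r$ are unit vectors and $p_i=c+rv_i$. The chord condition becomes $p_1-p_0=r(v_1-v_0)\parallel\nu$, i.e. $v_1-v_0=s\nu$ for some scalar $s$; note that the center $c$ cancels out entirely. Since $p_1\neq p_0$ (the line through $p_0$ in direction $-\nu$ meets the strictly convex $\partial T$ in a second point), we have $v_0\neq v_1$ and hence $s\neq 0$. From $|v_0|=|v_1|=1$ we get $(v_1-v_0)\cdot(v_1+v_0)=0$, whence $s\,\nu\cdot(v_1+v_0)=0$, and since $s\neq 0$ this forces $v_1+v_0\perp\nu$.

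These two relations say exactly that $v_1$ is obtained from $v_0$ by keeping the component tangent to $T_{q_1}\partial K$ and reversing the component along $\nu$: decomposing $v_0=v_0^{\parallel}+(v_0\cdot\nu)\nu$ with $v_0^{\parallel}\perp\nu$, one reads off $v_1=v_0^{\parallel}-(v_0\cdot\nu)\nu$. This is precisely the standard Euclidean reflection law at $q_1$, with equal angles of incidence and reflection, and it also confirms the orientation bookkeeping, since $v_1\cdot\nu=-\,v_0\cdot\nu$, so that $L_0$ points out of $K$ ($v_0\cdot\nu>0$) exactly when $L_1$ points into $K$ ($v_1\cdot\nu<0$), as required in the definition of the reflection map. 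I do not expect a genuine obstacle here: the argument is a direct unwinding of the definitions, and the only points demanding care are the correct identification of which directions play the roles of incidence and reflection, and the verification that the chord-parallel-to-$\nu$ condition, together with $|v_0|=|v_1|$, yields reflection in $T_{q_1}\partial K$ rather than some other involution — both of which are settled by the short computation above.
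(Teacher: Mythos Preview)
Your argument is correct: it is exactly the direct unwinding of the definitions that the paper alludes to when it says the proposition ``follows from definition'' (the paper gives no further proof). The only thing one might add is that the same computation appears implicitly in the proof of Proposition~\ref{cell}, where the general ellipsoid case is reduced to the ball case by a symplectic rescaling.
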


Studying $(K,T)$-billiard orbits is closely related to  C.Viterbo's Symplectic Isoperimetric Conjecture, 
see \cite{viterbo} and \cite[p. 2006, conjecture 1.2]{ako1}, which 
states that for every convex body $\Sigma$ in the standard symplectic space $\rr^{2n}$ and for every symplectic capacity the ratio of capacities of the body $\Sigma$ and the unit ball is no greater than the ratio of their volumes  in power $\frac1n$. Namely, it was shown in \cite{hz}, see also \cite{ao2}, that the Hofer-Zehnder capacity of every convex bounded domain in $\rr^{2n}$ is equal to the minimal action of closed orbit of characteristic line field on its boundary. In particular, this holds for closed $(K,T)$-billiard orbits. 
As was shown in a joint paper 
by S.Artstein-Avidan, R.Karasev and Y.Ostrover, see \cite[theorem 1.6]{ako1},
the Viterbo's Conjecture implies the famous Mahler Conjecture, which states that the product of every 
convex body $K\subset\rr^n$ with its dual has volume no less than $\frac{4^n}{n!}$. 
The Mahler Conjecture in three dimensions was proved in \cite{is}. 

Very recently a counterexample to the Viterbo's Conjecture was constructed in a joint paper by 
P. Haim-Kislev and Y.Ostrover \cite{hko} using dynamics of $(K,T)$-billiards.

In \cite{tabpr} Sergei Tabachnikov introduced the projective billiards, which are common generalization 
of billiards on all the space forms of constant curvature, see the next definition. 

The goal of the present paper is to describe those convex bodies $T$ for which the corresponding $T$-billiard  is a projective billiard. Our main result shows that this holds 
only in the case, when $T$ is an ellipsoid (or more generally, a quadric, if $T$ is not necessarily bounded). 

The main results are stated in Subsection 1.1. As an application, in Subsection 1.2 we state and deduce similar results on Finsler billiards. 

\subsection{Main results. The $T$-billiards with projective reflection law}

\begin{definition} see \cite{tabpr}. A {\it projective billiard} is a smooth hypersurface $C\subset\rr^n$ equipped with a transversal line field $\mcn$. 
 For every $Q\in C$ the {\it projective billiard reflection involution} at $Q$ acts on the space of oriented lines through $Q$ as the affine involution 
 $\rr^n\to\rr^n$ that fixes the points of the tangent line to $C$ at $Q$, preserves the line $\mcn(Q)$ and acts on $\mcn(Q)$ as central symmetry 
 with respect to the point\footnote{In other words, two (non-oriented) lines $a$, $b$ through $Q$  are permuted by reflection at $Q$, if  and only if $\mcn(Q)$, $a$, $b$ lie in one two-dimensional subspace $\Pi$ (with origin at $Q$) and the quadruple of lines $\ell:=\Pi\cap T_QC$, $\mcn(Q)$, $a$, $b$ is harmonic: there exists a projective  involution of the projectivized subspace $\Pi$ 
 that fixes $\ell$, $\mcn(Q)$ and permutes $a$, $b$.} $Q$. 
 In the case, when $C$ is a strictly convex closed hypersurface,  the {\it projective billiard map} 
  acts on the {\it phase cylinder:} 
 the space of oriented lines intersecting $C$. It is not an involution. It sends an oriented line to its image under the above reflection involution at its last point $Q$ 
 of intersection with $C$ in the sense of orientation. See Figure 2.
 \end{definition} 
 \begin{figure}[ht]
  \begin{center}
   \epsfig{file=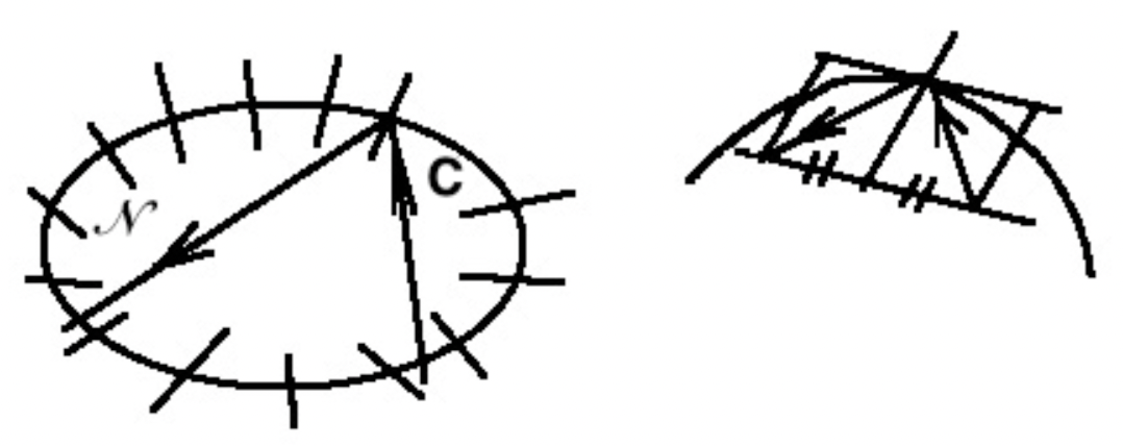, width=28em}
    \caption{The projective billiard reflection.}
    \label{fig:0}
  \end{center}
\end{figure}
\begin{example} see \cite{tabpr}. \label{exconst} 
A usual Euclidean planar billiard is a projective billiard with transversal line field being the 
normal line field. Each space form of constant curvature, i.e., $\rr^n$, $S^n$, $\hh^n$, is realized as 
a hypersurface $\Sigma\subset\rr^{n+1}_{x_1,\dots,x_{n+1}}$ equipped with an appropriate quadratic form $\Psi$, the Riemannian metric on the hypersurface being induced by the quadratic form: 
$$\rr^n=\{ x_{n+1}=1\}, \ \Psi:=\sum_{j=1}^nx_j^2;$$
$$S^n=\{ \sum_{j=1}^{n+1}x_j^2=1\}, \ \Psi=\sum_{j=1}^{n+1}x_j^2;$$
$$\hh^n=\{\sum_{j=1}^{n}x_j^2-x_{n+1}^2=-1\}, \ \Psi=\sum_{j=1}^{n}x_j^2-x_{n+1}^2.$$
 The tautological projection $\pi:\rr^{n+1}\setminus\{0\}\to\rp^n$ sends 
$\Sigma_+:=\Sigma\cap\{ x_{n+1}>0\}$ to the affine chart 
$\rr^n=\{ x_{n+1}=1\}\subset\rp^n_{[x_1:\dots:x_{n+1}]}$. Each Riemannian billiard on $\Sigma$ bounded by a hypersurface $\alpha\subset\Sigma$ is projected to a 
projective billiard on the hypersurface $C=\pi(\alpha)$ with the transversal line field $\mcn$ 
being the image of the normal line field to $\alpha$ by the differential $d\pi$. The projection $\pi$ 
sends each orbit of the Riemannian billiard on $\alpha$ to an orbit of the corresponding projective 
billiard.
\end{example}

\begin{definition} A projective billiard on a hypersurface $C\subset\rr^n$ is said to be {\it affinely Euclidean,} if there exists an affine transformation $F:\rr^n\to\rr^n$  transforming it into a usual billiard, with usual reflection of 
lines from the image $F(C)$: this holds if and only if the image of the transversal line field $\mcn$ 
on $C$ under the differential $dF$ is the normal line field to $F(C)$.
\end{definition}

\begin{proposition} \label{cell} 1) Let $T\subset\rr^n$ be an ellipsoid, and let $K\subset\rr^n$ be a convex bounded open subset with a smooth boundary.  Then the corresponding $T$-billiard reflection transformation $R_K$ is a reflection defined by a projective billiard structure on $\partial K$. 

2) The latter projective billiard structure is affinely Euclidean.
\end{proposition}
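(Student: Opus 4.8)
The plan is to reduce the statement to the Euclidean case of Proposition~\ref{pro-ball} by means of a single affine symplectomorphism of $(\rr^{2n},\omega)$. The key observation is that a linear change $q\mapsto Aq$ of the $q$-coordinates, combined with the contragredient change $p\mapsto(A^T)^{-1}p$ of the $p$-coordinates, preserves $\omega=\sum_j dq_j\wedge dp_j$ and therefore transports $(K,T)$-billiard orbits to $(A(K),(A^T)^{-1}(T))$-billiard orbits. Writing the ellipsoid as $T=M(B)+c$, where $B\subset\rr^n$ is the closed unit ball, $M$ an invertible linear map and $c\in\rr^n$, I would choose $A=M^T$, so that the $p$-part $(A^T)^{-1}=M^{-1}$, after the additional translation by $c$, carries $T$ onto $B$. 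Projecting to the $q$-space, the resulting affine map $F:=M^T$ will carry the $T$-billiard in $K$ to the $B$-billiard in $F(K)$, which is the usual Euclidean billiard; both assertions then follow immediately.

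Concretely, I would introduce the affine symplectomorphism
$$\Psi(q,p)=\bigl(M^Tq,\ M^{-1}(p-c)\bigr).$$
Its linear part is symplectic because, with $A=M^T$ and $p$-part $M^{-1}$, one has $A^T\cdot M^{-1}=M\cdot M^{-1}=I$; translations are symplectic as well, so $\Psi^*\omega=\omega$. Since $M^{-1}(T-c)=M^{-1}M(B)=B$, the map $\Psi$ sends $\overline K\times\overline T$ onto $\overline{F(K)}\times\overline B$ with $F=M^T$ and $B$ the unit ball. Being a symplectomorphism carrying the boundary hypersurface onto the corresponding one, $\Psi$ sends the characteristic line field to the characteristic line field; and because it maps the sublevel set $K\times T=\{H<1\}$ to the sublevel set defining $F(K)\times B$, it conjugates the associated Hamiltonian vector fields and hence preserves the canonical orientation of this field. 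Thus $\Psi$ maps oriented $(K,T)$-billiard orbits to oriented $(F(K),B)$-billiard orbits. Projecting by $\pi_q$ and using $\pi_q\circ\Psi=F\circ\pi_q$, I conclude that $F$ takes $T$-billiard orbits in $K$ to $B$-billiard orbits in $F(K)$; on the level of reflection maps this reads
$$R_{K,q}=(dF)^{-1}\circ R^{\mathrm{eucl}}_{F(K),F(q)}\circ dF,\qquad q\in\partial K,$$
where $dF=M^T$ and $R^{\mathrm{eucl}}$ is the Euclidean reflection provided by Proposition~\ref{pro-ball}.

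Both conclusions now follow. By Example~\ref{exconst} the Euclidean reflection $R^{\mathrm{eucl}}_{F(K),F(q)}$ is the projective billiard involution whose transversal field is the normal line field, i.e.\ the linear involution fixing the tangent hyperplane $T_{F(q)}\partial F(K)$ and reversing the normal line. Conjugating it by the linear isomorphism $dF=M^T$ yields a linear involution of the directions at $q$ that fixes the hyperplane $(dF)^{-1}\bigl(T_{F(q)}\partial F(K)\bigr)=T_q\partial K$ and reverses the transversal line $\mcn(q):=(dF)^{-1}(\text{normal line to }\partial F(K)\text{ at }F(q))$; this is exactly a projective billiard reflection on $\partial K$ with transversal field $\mcn$, which proves part~1. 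Part~2 is then immediate by definition: the affine map $F$ transforms the $T$-billiard into the Euclidean billiard in $F(K)$, so the projective structure is affinely Euclidean.

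The calculations with $\Psi$ (its symplecticity and the contragredient bookkeeping) are routine. The single point that deserves genuine care is the assertion that a symplectomorphism preserves the \emph{oriented} characteristic line field, so that oriented $(K,T)$-orbits are indeed carried to oriented $(F(K),B)$-orbits rather than merely to unoriented characteristics; this is what allows the reflection maps, and not just the underlying line fields, to be identified, and it rests on $\Psi$ mapping the defining sublevel set $K\times T$ onto the corresponding one and thereby conjugating the two Hamiltonian vector fields.
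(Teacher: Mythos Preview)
Your proof is correct and follows essentially the same approach as the paper: reduce to the unit-ball case of Proposition~\ref{pro-ball} via a linear symplectic change of coordinates $(q,p)\mapsto(Aq,(A^T)^{-1}p)$, so that the $T$-billiard becomes Euclidean after an affine map in the $q$-variables. The only differences are cosmetic---the paper first diagonalizes the ellipsoid by an orthogonal $p$-change and then rescales each axis, whereas you work with a general $M$ and also carry along the translation by the center $c$; the underlying argument is identical.
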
     
\begin{proof} Let us choose orthogonal coordinates $p_1,\dots,p_n$ in which $T$ is an ellipse 
$\sum_{j=1}^n\frac{p_j^2}{b_j^2}=1$. Let us complete $p_j$ by affine coordinates $q_1,\dots,q_n$ 
on $K$ so that $(q_1,\dots,q_n, p_1,\dots,p_n)$ are symplectic coordinates. The variable change 
\begin{equation}\wt p_j=\frac{p_j}{b_j}, \ \wt q_j=b_j q_j\label{rescal}\end{equation}
is symplectic and transforms $K\times T$ to the product of the rescaled image $\wt K$ of the 
set $K$ and the unit ball in the $\wt p$-coordinates. It conjugates the Hamiltonian flows with a 
Hamiltonian function $H$ as at the beginning of the paper in the coordinates $(p,q)$ and $(\wt p,\wt q)$. In the 
coordinates $(\wt q,\wt p)$  the points $\wt q$ move along usual billiard orbits in $\wt K$, by Proposition \ref{pro-ball}. Therefore, 
the above rescaling sends the  movements of the points $q\in K$ with reflections $I_K$ to billiard 
orbits in $\wt K$. Thus, the rescaling in the $q$-coordinates conjugates the 
reflection $I_K$  at each point $q\in\partial K$ with the standard billiard 
reflection $I_{\wt K}$ at $\wt q$. Hence, $I_K$ is a projective billiard reflection: it is defined by the transversal line field 
$\mcn$ on $\partial K$ that is the pullback of the normal field on $\wt K$ under the 
rescaling (\ref{rescal}) of the $q$-coordinates. The corollary is proved.
\end{proof}
\begin{corollary} \label{cquad} Statement 1) of Proposition \ref{cell} is valid in the case when $T$ is 
bounded by an arbitrary (maybe unbounded) quadric.
\end{corollary}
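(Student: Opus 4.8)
The plan is to bypass the rescaling argument of Proposition \ref{cell} (which only turns an \emph{ellipsoid} into a ball and so cannot reach paraboloids or hyperboloids) and instead to compute the $T$-billiard reflection law directly for an arbitrary quadric, reading it off from the explicit Hamiltonian description of the $(K,T)$-billiard given in the Introduction. Write the quadric bounding $T$ as the zero level set $\{G=0\}$ of a quadratic polynomial $G(p)=p^{\top}Mp+2b^{\top}p+c$ with $M=M^{\top}$, and take $T=\{G<0\}$ to be its convex side. Then the exterior normal to $\partial T$ at a point $p$ is co-directed with $w(p):=\nabla G(p)/2=Mp+b$, which is the only feature of $T$ that will enter the computation.

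Next I would fix a bouncing point $q_1\in\partial K$ and set $d:=\vec n(q_1)$, the exterior normal to $\partial K$. By the Hamiltonian description, the incoming line $L_0$ is co-directed with $w_0:=w(p_0)$ for the point $p_0\in\partial T$ whose normal realizes the direction of $L_0$; the variable $p$ then travels along the line $p_0+\rr d$ until it meets $\partial T$ a second time at $p_1$; and the reflected line $L_1$ is co-directed with $w_1:=w(p_1)$ (all directions compared under the standard identification $\rr^n_q=\rr^n_p=\rr^n$). Since $G(p_0+td)=t\,(2\,d^{\top}w_0+t\,d^{\top}Md)$, the second intersection is given by $t_1=-2(d^{\top}w_0)/(d^{\top}Md)$, whence
\[
 w_1=w_0+t_1\,Md=\Bigl(I-\tfrac{2}{d^{\top}Md}\,(Md)\,d^{\top}\Bigr)w_0=:R\,w_0.
\]
The crucial point is that $w_1$ depends \emph{linearly} on $w_0$, and on $q_1$ only through $d$, so $R_{K,q_1}$ is induced on line directions by the single linear map $R$.

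I would then identify $R$ as a projective reflection. A direct check gives $R^2=I$; moreover $R$ fixes the hyperplane $\{w:d^{\top}w=0\}=T_{q_1}\partial K$ and sends $Md\mapsto-Md$. Hence $R$ is exactly the projective billiard reflection involution at $q_1$ associated with the transversal line $\mcn(q_1):=\rr\cdot M\vec n(q_1)$, which fixes $T_{q_1}\partial K$ and acts as a central symmetry on $\mcn(q_1)$. As $q$ varies, $\mcn(q)=\rr\cdot M\vec n(q)$ is a smooth transversal line field on $\partial K$ whose projective billiard reflection coincides with $R_K$, proving statement 1) of Proposition \ref{cell} for an arbitrary quadric. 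As a sanity check, for the unit ball $M=I$, $b=0$, so $R=I-2\,dd^{\top}/|d|^2$ is the usual Householder reflection across $T_{q_1}\partial K$ and $\mcn$ is the normal field, recovering Proposition \ref{pro-ball}; for an ellipsoid $M=\diag(b_j^{-2})$ one recovers Proposition \ref{cell}.

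The main thing to handle carefully is the degenerate set of directions with $d^{\top}Md=0$, which arises only when $M$ is indefinite or singular, i.e. for unbounded quadrics (hyperboloids, paraboloids, cylinders). There $t_1$ becomes infinite: the chord $p_0+\rr d$ runs off to infinity along an asymptotic direction of the quadric and has no second intersection with $\partial T$, so the reflection is genuinely undefined — consistent with the $T$-billiard being defined only on an open dense subset, and with $\mcn(q)=\rr\cdot M\vec n(q)$ being a \emph{transversal} line field precisely where $\vec n(q)^{\top}M\vec n(q)\neq0$. For a bounded ellipsoid $M$ is definite and this never occurs. The remaining points — fixing the sign so that the chosen second intersection yields $L_1$ directed into $K$, and confirming the Euclidean identification of the normal covector $w$ with the line-direction vector — are routine. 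I would expect the verification that $R$ is a reflection of the prescribed projective type (involution, correct fixed hyperplane and antifixed line) to be the only step needing genuine, if short, care. A less explicit alternative is to note that $R_{K,q}$ depends rationally on $(M,b,c)$ and that the condition of being such a projective involution is Zariski closed, so that its validity for ellipsoids (Proposition \ref{cell}) propagates to all quadrics by analytic continuation.
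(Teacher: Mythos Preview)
Your proof is correct, and it takes a genuinely different route from the paper. The paper's entire argument is the single sentence ``Corollary \ref{cquad} follows from Statement 1) of Proposition \ref{cell} by analytic extension of the projectivity of reflection map $R_{K,q}$ as $\partial T$ varies analytically in a family of complex quadrics'' --- precisely the alternative you mention in your closing paragraph. You instead compute the reflection explicitly from the quadratic polynomial $G$, obtain the linear map $R=I-\tfrac{2}{d^{\top}Md}(Md)d^{\top}$, and identify it as the affine involution fixing $T_{q_1}\partial K$ with antifixed line $\rr\cdot M\vec n(q_1)$. Each approach has its merits: the paper's is a one-liner that leans on the already-proved ellipsoid case, while yours is self-contained, produces the transversal line field $\mcn(q)=\rr\cdot M\vec n(q)$ in closed form, and makes the degenerate locus $\{d^{\top}Md=0\}$ visible rather than hidden behind an analytic-continuation argument. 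Your treatment of that degenerate locus (asymptotic chords, reflection undefined, transversality failing) is in fact more careful than the paper's, which does not discuss it at all. The computations $R^2=I$, $R|_{d^{\perp}}=\operatorname{id}$, $R(Md)=-Md$ are all correct as stated.
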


Corollary \ref{cquad} follows from Statement 1) of Proposition  \ref{cell} by analytic extension of the projectivity of reflection map $R_{K,q}$ as $\partial T$ varies analytically in a family of complex quadrics.

We prove the next theorem, which  is a converse statement. 

\begin{theorem} \label{prodim2} Let in a  $(K,T)$-billiard  the reflection $I_K$  be 
a projective billiard reflection. Let in addition the boundary $\partial T$ be 
$C^6$-smooth, closed and have positive definite second fundamental quadratic form everywhere. Then $\partial T$ is an ellipsoid.
\end{theorem}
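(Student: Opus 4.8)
My plan is to convert the projectivity hypothesis into the classical statement that the midpoints of parallel chords of $T$ are coplanar, and then to finish with a short symmetry argument. Throughout I fix $q\in\partial K$ with exterior unit normal $\nu=\vec n(q)$, so that $T_q\partial K=\nu^\perp$, and I record that as $q$ runs over $\partial K$ the vector $\nu$ runs over all of $S^{n-1}$ (the Gauss map of a convex body is onto). From the description of the dynamics, the reflection $R_{K,q}$ acts on directions by the involution $\Phi_\nu\colon v_0\mapsto v_1$, where $v_i=\vec n(p_i)$ and $p_0,p_1\in\partial T$ are the two endpoints of the chord of $T$ parallel to $\nu$ (equivalently, $p_i$ is the point of $\partial T$ with outer normal $v_i$, and $p_1-p_0\parallel\nu$). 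I would first note that $\Phi_\nu$ descends to an involution of the projectivized directions $\rp^{n-1}$ whose fixed locus is exactly the grazing directions $v\perp\nu$, i.e. $\mathbb P(\nu^\perp)$, since a chord degenerates precisely when $\nu\in T_{p}\partial T$. Because, by hypothesis, $\Phi_\nu$ is the projective-billiard involution at $q$, which fixes $\mathbb P(T_q\partial K)=\mathbb P(\nu^\perp)$ pointwise together with one further center, it is a harmonic homology with axis $\mathbb P(\nu^\perp)$. Passing to the gnomonic (central) chart $v=(v',v_n)\mapsto v'/v_n$, in which $\nu=e_n$ and $\mathbb P(\nu^\perp)$ is the hyperplane at infinity, a harmonic homology with axis at infinity is an ordinary central symmetry $w\mapsto 2c(\nu)-w$.

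Next I would make this explicit. Writing $\partial T$ over its shadow $U_\nu\subset\nu^\perp$ as an upper graph $f_+$ and a lower graph $f_-$ (well defined and smooth by strict convexity and positive definiteness of the second fundamental form), the gnomonic images of the two normals along the chord over $u\in U_\nu$ are $-\nabla f_+(u)$ and $-\nabla f_-(u)$. The central-symmetry condition then reads $\nabla(f_++f_-)\equiv\mathrm{const}$, i.e. $f_++f_-$ is affine on $U_\nu$. Equivalently, the midpoints of all chords of $T$ parallel to $\nu$ lie in a single hyperplane $D_\nu$; and equivalently still, $T$ is invariant under the affine skew-reflection $\sigma_\nu$ that fixes $D_\nu$ pointwise and reverses the direction $\nu$. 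Since $\nu$ ranges over all of $S^{n-1}$, I obtain such a $\sigma_\nu$ for every direction.

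For the conclusion I would use a compactness/symmetry argument. The centroid $O$ of $T$ is affine-covariant, hence fixed by every $\sigma_\nu$, so all the diametral hyperplanes $D_\nu$ pass through $O$; I translate $O$ to the origin, so each $\sigma_\nu$ becomes linear. The group $G\subset\operatorname{GL}(n,\rr)$ generated by the $\sigma_\nu$ preserves the bounded convex body $T$ with $O\in\operatorname{Int}T$, hence is bounded, and its closure is a compact subgroup carrying an invariant inner product $A$ (average any inner product over Haar measure). Then every $\sigma_\nu$ is an $A$-orthogonal involution with $(n-1)$-dimensional fixed space $D_\nu$, so its $(-1)$-eigenline $\rr\nu$ is $A$-orthogonal to $D_\nu$, whence $D_\nu=\nu^{\perp_A}$. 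As $\nu$ runs over $S^{n-1}$ these mirrors exhaust all hyperplanes through $O$, and the corresponding reflections generate the full orthogonal group $\operatorname{O}(n,A)$. Therefore $T$ is invariant under $\operatorname{O}(n,A)$, so it is an $A$-ball, i.e. an ellipsoid.

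The hard part is the reduction carried out in the first two paragraphs: correctly identifying the projective reflection with a harmonic homology whose axis is forced to be $\mathbb P(\nu^\perp)$, and recognizing through the gnomonic chart that this is exactly the coplanarity of midpoints of parallel chords. Once the whole family $\{\sigma_\nu\}_{\nu\in S^{n-1}}$ of affine reflections preserving $T$ is in hand, the ellipsoid conclusion is essentially routine; the only points there requiring care are the compactness of $G$ (needed for the invariant inner product $A$) and the identity $D_\nu=\nu^{\perp_A}$, which is precisely what forces the mirrors to fill out all hyperplanes through $O$ and hence yields full orthogonal symmetry.
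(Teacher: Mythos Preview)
Your argument is correct and takes a genuinely different route from the paper's. The paper proceeds locally: it reduces the statement to Theorem~\ref{prodim2'} about germs, and proves that every planar section of $\partial T$ is a conic by showing (via delicate jet computations comparing $\alpha$ with an auxiliary ``osculating quadric along $\beta$'') that every point of such a section is sextactic; it then invokes the fact that a hypersurface all of whose planar sections are conics is a quadric. This asymptotic analysis is what uses the $C^6$ hypothesis.

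Your proof, by contrast, is global and symmetry-based. You observe that the harmonic-homology form of the projective reflection, read in a gnomonic chart, forces $\nabla(f_++f_-)$ to be constant, i.e.\ the midpoints of each pencil of parallel chords of $T$ lie on a hyperplane; this gives an affine skew reflection $\sigma_\nu$ preserving $T$ for every direction $\nu$. Averaging over the compact closure of the group $\langle\sigma_\nu\rangle$ then yields an invariant inner product $A$, forces $D_\nu=\nu^{\perp_A}$, and hence full $O(n,A)$-symmetry of $T$. This is shorter and more conceptual, and in fact needs only $C^2$ smoothness. What the paper's approach buys is locality: its method actually establishes the stronger germ statement (Theorem~\ref{prodim2'}), where $\partial T$ is not assumed closed and no compactness or centroid argument is available, so your averaging step has no analogue there.
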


Theorem \ref{prodim2} has an equivalent reformulation stated below.  To state it, consider an arbitrary 
convex bounded domain $T$ in $\rr^n$ whose boundary is $C^6$-smooth and has positive definite second fundamental quadratic form. 
Let us identify $\partial T$ with the unit sphere $S^{n-1}$ via the Gauss map sending a point $p\in\partial T$ 
to the exterior normal vector to $T$ at $p$. 
For a given class $\mcl$ of parallel lines consider the involution 
$\mcr_\mcl: S^{n-1}\to S^{n-1}$ acting as follows. 
Take an arbitrary  line $\La\in\mcl$ intersecting $\partial T$, let $A$ and $B$ be their two intersection points (confluenting to one point in the case of tangency). Let $\vec n(A)$, $\vec n(B)$ be 
respectively the corresponding exterior normal vectors to $\partial T$. By definition,
 \begin{equation} \mcr_\mcl(\vec n(A))=\vec n(B).\label{jmcl}\end{equation}
 The map $\mcr_\mcl$ fixes vectors orthogonal to $\Lambda$; their subset is identified with the unit sphere in 
 $T_q\partial K$. 
 We will also treat $\mcr_\mcl$ as a map acting on the space of oriented lines passing through  
 the point $q\in\partial K$ where the normal vector $\vec n(q)$ to $\partial K$ is parallel to  $\La$: 
 the vectors $\vec n(A)$, $\vec n(B)$ are  identified with the lines through $q$ that are parallel and co-directed with them. We use the following 
 \begin{proposition} \label{proeq}  Fix an arbitrary point $q\in\partial K$. Let $\mcl$ be the class of lines parallel to the normal 
 line to $\partial K$ at $q$. Then the $T$-billiard reflection map $R_{K,q}$ coincides with the corresponding map $\mcr_\mcl$ considered as a map acting on  oriented lines passing through  $q$ and restricted to the subset of 
 lines oriented "out of $K$" at $q$.
 \end{proposition}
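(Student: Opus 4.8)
The plan is simply to unwind the two definitions and verify that they produce one and the same involution of the sphere $S^{n-1}$ of directions of oriented lines through $q$; no computation is needed beyond careful bookkeeping of orientations. First I would recall, from the description of the $(K,T)$-billiard dynamics given in this section, how $R_{K,q}$ arises. An incoming oriented line $L_0$ through $q=q_1$ directed out of $K$ is the movement line of the point $q$ while the companion point $p$ sits at some $p_0\in\partial T$, and the velocity of $q$ is co-directed with the exterior normal $\vec n(p_0)$; thus the direction vector of $L_0$ is exactly $\vec n(p_0)$. When $q$ reaches $\partial K$ at $q_1$, the point $p$ leaves $p_0$ moving along the interior normal to $\partial K$ at $q_1$, i.e. in the direction $-\vec n(q)$, travels through the interior of $T$, and exits $\partial T$ at a point $p_1$; thereafter $q$ moves along the new line $L_1$ whose direction vector is $\vec n(p_1)$. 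Hence, under the identification of oriented lines through $q$ with their direction vectors in $S^{n-1}$, the map $R_{K,q}$ sends $\vec n(p_0)\mapsto\vec n(p_1)$.

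The key geometric observation is that the intermediate segment traced by $p$, from $p_0$ to $p_1$, is a chord of $T$ with direction $-\vec n(q)$, hence parallel to the normal line of $\partial K$ at $q$; that is, it is a line of the class $\mcl$ of the statement. Consequently $p_0$ and $p_1$ are precisely the two intersection points with $\partial T$ of a line $\La\in\mcl$. Comparing with the definition of $\mcr_\mcl$, which declares $\mcr_\mcl(\vec n(A))=\vec n(B)$ for the two intersection points $A,B$ of a line of $\mcl$ with $\partial T$, one reads off at once that $\mcr_\mcl(\vec n(p_0))=\vec n(p_1)$. Composing with the Gauss-map identification $\partial T\cong S^{n-1}$, $p\mapsto\vec n(p)$, both $R_{K,q}$ and $\mcr_\mcl$ are therefore given by the same prescription $\vec n(p_0)\mapsto\vec n(p_1)$, so the two maps coincide on the relevant domain.

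It then remains to match the domains and the fixed loci. I would check that the clause ``directed out of $K$ at $q$'' singles out exactly those direction vectors $v=\vec n(p_0)$ with $\langle v,\vec n(q)\rangle>0$, which is the half of $S^{n-1}$ to which the statement restricts $\mcr_\mcl$; the opposite half corresponds to lines directed into $K$ and is accounted for by the involutivity of the full reflection. The common fixed locus is the set of directions orthogonal to $\La$: a direction $v\perp\La$ means the chord through the corresponding boundary point is tangent to $\partial T$ there, so its two endpoints confluence and $\mcr_\mcl$ fixes $v$; simultaneously $v\perp\vec n(q)$, so the corresponding line through $q$ lies in $T_q\partial K$ and is fixed by the reflection, in agreement with the property of $R_{K,q}$ recorded earlier in the section.

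The only genuinely delicate point is the orientation bookkeeping: one must confirm that the velocity of $q$ along $L_0$ is co-directed (not merely parallel) with $\vec n(p_0)$, that $p$ indeed moves along the interior normal $-\vec n(q)$ so that the chord $p_0p_1$ lies in $\mcl$ with endpoints matching the correct orientations, and that the Gauss-map identification is applied consistently to both the incoming line $L_0$ and the outgoing line $L_1$. These are exactly the co-direction clauses built into the definition of the $(K,T)$-billiard and into the definition of $\mcr_\mcl$, so once they are tracked carefully the identification is immediate; I expect this to be the main obstacle only in the sense of demanding precision rather than any new idea.
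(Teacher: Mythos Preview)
Your proposal is correct and is essentially the same approach as the paper's: the paper's proof consists of the single sentence ``The proposition follows from the discussion at the beginning of the section,'' and your write-up is precisely a careful unpacking of that discussion, tracking that $R_{K,q}$ sends $\vec n(p_0)\mapsto\vec n(p_1)$ along a chord of $\partial T$ parallel to $\vec n(q)$, which is exactly the defining rule of $\mcr_{\mcl}$.
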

 The proposition follows from the discussion at the beginning of the section.
  
  \begin{theorem} \label{prodim2'}\footnote{Very recently a new, direct and simple proof of Theorems  \ref{prodim2'} and \ref{prodim2} was jointly obtained  by Vladimir Matveev with the author,  based on new ideas suggested by Matveev} Let $\alpha\subset\rr^n$ be a germ of hypersurface with 
  positive definite second fundamental form. 
   Consider the tautological projection $S^{n-1}\to\rp^{n-1}$, which is 
 the double covering sending each unit vector to its projectivization: the point representing its 
 ambient line. Let for every class $\mcl$ of parallel lines (that contains at least one line intersecting 
 $\alpha$ twice) the involution $\mcr_\mcl$ given by (\ref{jmcl}) (with $\partial T=\alpha$) 
 be the lifting of a projective involution $\rp^{n-1}\to\rp^{n-1}$. Then $\alpha$ is a quadric.
 \end{theorem}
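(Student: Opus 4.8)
The plan is to reduce the statement for general $n$ to its two–dimensional case, and then to settle the planar case by a local computation with the support function.

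\textbf{Reformulation: harmonic homology and the model quadric.} First I would record what the hypothesis means concretely. On $S^{n-1}$ the fixed–point set of $\mcr_\mcl$ is exactly the equator $v^\perp\cap S^{n-1}$: a normal $\vec n(A)$ is fixed iff $A=B$, i.e. iff $\La$ is tangent to $\alpha$, i.e. iff $\langle\vec n(A),v\rangle=0$. Hence if $\mcr_\mcl$ descends to a projective involution $\sigma_v$ of $\rp^{n-1}$, then $\sigma_v$ fixes the hyperplane $\mathbb P(v^\perp)$ pointwise, so it is a harmonic homology: there is a center $c_v\notin\mathbb P(v^\perp)$ with $\sigma_v([u])=[M_vu]$, where
\[
M_v=\mathrm{Id}-\frac{2}{\langle c_v,v\rangle}\,c_v\,v^\top .
\]
Thus the hypothesis is equivalent to: for every $v$ there is a vector $c_v$ with $[\vec n(B)]=[M_v\,\vec n(A)]$ for every chord $AB\parallel v$; call this property $(\star)$. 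For the quadric $\{\langle x,Qx\rangle=1\}$ one has $\vec n(x)\parallel Qx$ and $QB-QA\parallel Qv$, so $(\star)$ holds with $c_v=[Qv]$; this is the infinitesimal content of Proposition \ref{cell}. The goal is the converse.

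\textbf{Reduction to planar sections.} The key structural step is: every planar section of $\alpha$ again satisfies the hypothesis of the theorem in the plane of the section. Fix a $2$–plane $E\subset\rr^n$ and set $\beta:=\alpha\cap E$, a convex arc of positive curvature (positive definiteness of the second fundamental form of $\alpha$ forces positive curvature of all its sections). The normal of $\beta$ in $E$ at a point $p$ is $\pi_E\vec n_\alpha(p)$, since $\vec n_\alpha(p)\perp T_p\beta\subset E$. For $v\in E$, a chord $AB$ of $\beta$ parallel to $v$ is a chord of $\alpha$ parallel to $v$, so $(\star)$ gives $[\vec n_\alpha(B)]=[M_v\vec n_\alpha(A)]$; projecting to $E$ and using $\langle\vec n_\alpha,v\rangle=\langle\pi_E\vec n_\alpha,v\rangle$ (as $v\in E$) yields
\[
\big[\pi_E\vec n_\alpha(B)\big]=\Big[\pi_E\vec n_\alpha(A)-\tfrac{2\langle\pi_E\vec n_\alpha(A),v\rangle}{\langle c_v,v\rangle}\,\pi_E c_v\Big],
\]
the value at $[\pi_E\vec n_\alpha(A)]$ of a \emph{fixed} linear endomorphism of $E$. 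Hence the chord involution of $\beta$ is the restriction of a projective map of $\mathbb P(E)=\rp^1$, and being an involution it is a projective involution; so $\beta$ satisfies the hypothesis of Theorem \ref{prodim2'} with $n=2$. Granting the planar case, every planar section of $\alpha$ is a conic, and a germ of $C^6$ strictly convex hypersurface all of whose planar sections are conics is a quadric — a classical characterization. This deduces the theorem for all $n\ge 3$ from the case $n=2$.

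\textbf{The planar case.} Now $\alpha\subset\rr^2$. Parametrize by the normal angle $\theta$, with support function $h(\theta)$ and curvature radius $\rho=h+h''>0$. For $v=(\cos\phi,\sin\phi)$ the endpoints of a $v$–chord are the points with equal value of
\[
P_\phi(\theta):=\langle p(\theta),v^\perp\rangle=h(\theta)\sin(\theta-\phi)+h'(\theta)\cos(\theta-\phi),
\]
since a chord is parallel to $v$ exactly when its endpoints share the same $v^\perp$–coordinate. One computes $P_\phi'(\theta)=\rho(\theta)\cos(\theta-\phi)$, vanishing within the arc only at the silhouette normal $\theta^{*}=\phi+\tfrac\pi2$, a nondegenerate critical point; this is the fixed point of the involution $\theta_1\mapsto\theta_2$ defined by $P_\phi(\theta_1)=P_\phi(\theta_2)$. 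The hypothesis is that this involution becomes a Möbius involution in the projective coordinate $t=\tan\theta$. Writing $\theta=\theta^{*}+s$ and expanding $s\mapsto\bar s(s)=-s+a_2s^2+a_3s^3+\cdots$, whose coefficients $a_k$ are universal expressions in the jet of $h$ at $\theta^{*}$, the requirement that $s\mapsto\bar s$ be projective in $t$ (vanishing Schwarzian in the $t$–coordinate) becomes a differential relation $E[h](\theta^{*})=0$; as $\phi$, hence $\theta^{*}$, is arbitrary, this holds identically in $\theta$. I would then identify $E[h]=0$ with the classical ODE characterizing conics — equivalently, constancy of the affine curvature of $\alpha$ — whose solutions form exactly the $5$–parameter family of conics; that conics solve it is the forward direction already contained in Proposition \ref{cell} and Corollary \ref{cquad}. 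Hence $\alpha$ is a conic.

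\textbf{Where the difficulty lies.} The reduction and the final classical step are robust; the real work is the planar computation: expanding the level–set involution of $P_\phi$ to the order at which projectivity first constrains the jet, performing the change of variable $\theta\mapsto\tan\theta$, and checking that the resulting identity $E[h]\equiv 0$ cuts out \emph{precisely} the conics. The delicate points I expect to be (i) matching the harmonic normalization order by order, in particular going to sufficiently high order so that no lower–order identity is vacuous, and (ii) confirming that the solution space has the expected dimension $5$, so the forward direction forces equality with the family of conics rather than mere inclusion. Controlling the two sheets of the fold near the degenerate directions is the technical heart.
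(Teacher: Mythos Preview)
Your reduction to planar sections is correct and is genuinely cleaner than the paper's route. The paper does \emph{not} reduce the $n$-dimensional hypothesis to the $2$-dimensional theorem; instead it works throughout with the ambient involution $\mcr_{\alpha,\mcl}$ on $\rp^{n-1}$, constructs an auxiliary ``osculating quadric along $\beta$'' (Proposition \ref{piii}), proves that its involution must coincide with $f$ on all of $\rp^{n-1}$, and only then extracts a $4$th-order contradiction. Your observation that $\pi_E\circ M_v=N_v\circ\pi_E$ with $N_v$ a projective involution of $\mathbb P(E)$ short-circuits all of that machinery: once the planar case is known, every section is a conic and Theorem \ref{tsect} finishes. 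So on this step you have an advantage.

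The planar case, however, is where your proposal stops being a proof. The sentence ``I would then identify $E[h]=0$ with the classical ODE characterizing conics'' is exactly the content of the theorem in dimension two, and you have not carried it out; you yourself flag the matching of orders and the dimension count as delicate. The paper's planar argument avoids any explicit ODE and is worth knowing: compare $f=\mcr_{\alpha,\mcl}$ with the involution $g=\mcr_{\Gamma,\mcl}$ of the osculating conic $\Gamma$ at $p$. Both are projective involutions of $\rp^1$ sharing the fixed point $[\vec n(p)]$, and two such either coincide or already differ at the $2$-jet (Proposition \ref{jetneq}). A direct computation (Claim 2) gives $f(t)-g(t)=8ct^4(1+o(1))$, where $c$ is the $5$th-order deviation of $\alpha$ from $\Gamma$; thus the $2$-jets agree, forcing $f=g$, yet $f\ne g$ unless $c=0$. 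Hence every point is sextactic and $\alpha$ is a conic. This uses only the rigidity of projective involutions of $\rp^1$, not any identification of a differential equation, and in particular makes precise the order to which one must expand---exactly the point you were unsure of.
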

   Theorem \ref{prodim2} follows from Theorem \ref{prodim2'} and Proposition \ref{proeq}.  
  
  \begin{remark} Maxim Arnold and Sergei Tabachnikov proved a result closely related to Theorem \ref{prodim2'}, see  \cite[theorem 3]{art}: {\it if a smooth strictly convex closed 
  hypersurface $S$ admits a normal vector field 
  $N$ such that for every $x,y\in S$ one has $<N(x),x-y>=-<N(y),x-y>$, then $S$ is an ellipsoid 
  (ellipse in two dimensions).}
  \end{remark} 
  
  \subsection{Applications to Finsler billiards}
  
Let us recall the following well-known material from Finsler geometry, 
see, e.g., \cite{abate, alv1, alv2, arn1, bao, gt}. A {\it Finsler manifold} is a manifold $M$ equipped with a {\it Finsler structure:}  a family of smooth strictly convex closed hypersurfaces $I_x\subset T_xM$ centrally-symmetric with respect to zero; they are called {\it indicatrices.}  The {\it Finsler norm} on $T_xM$ is the norm for which the indicatrix $I_x$ is the unit sphere. For every vector $v\in T_xM$ its Finsler norm is denoted by $L(x,v)$. A {\it Finsler geodesic} is a curve $\gamma:[t_0,t_1]\to M$ minimizing the functional 
$$|\gamma|:=\int_{t_0}^{t_1}L(\gamma(t),\dot\gamma(t))dt$$
among curves with fixed endpoints. 

\begin{example} Recall (see, e.g., \cite{gt}) that a Finsler structure on $\rr^n$ is called {\it Minkowski,} if it is translation invariant. A Finsler structure on a domain $U\subset\rr^n$ is {\it projective,} (or {\it projectively flat}), if all its geodesics are lines. It is well-known that Minkowski Finsler structures  
 are projectively flat, see \cite[p. 280]{gt}, and so are the Riemannian metrics of  spaces forms of constant curvature in appropriate 
 chart. Conversely, a simply connected domain in $\rr^n$ equipped with a projectively flat Riemannian metric 
 is always isometric (up to rescaling the metric by a constant factor) to a domain in a space form of constant curvature (Beltrami Theorem, see, e.g.,  
    \cite[section VI.2]{schouten}).
 \end{example}

E.Gutkin and S.Tabachnikov introduced the following {\it Finsler reflection law} in their joint paper \cite{gt}. 
Let $S\subset M$ be a hypersurface (for simplicity, strictly convex). Let $x\in S$,  and let $a,b\in M$ be points 
such that there exist a geodesic arc from $a$ to $x$ and a geodesic arc from $x$ to $b$. Let $u$, $v$ denote their directing unit tangent vectors at $x$. We suppose that $u$ and $v$ lie on different sides from the tangent hyperplane $T_xS$. We say that the {\it Finsler reflection with respect to the hyperplane $T_xS$ 
sends $u$ to $v$,} if $x$ is a critical point of the function $g(x):=d(a,x)+d(x,b)$: the sum of lengths of the above 
geodesic arcs with fixed endpoints $a$ and $b$. The following results were proved in \cite[pp. 281, 282]{gt}: 

1) Thus defined reflection 
transformation depends only on $x$ and $T_xS$ and is independent on $a$, $b$, $S$.

2) The reflection is given by a well-defined diffeomorphic involution $I_x\to I_x$ of the indicatrix that  
can be defined geometrically in one of the following equivalent ways.

a)  Consider the {\it Legendre transform} $D:I_x\to 
T_x^*M\setminus\{0\}$, which sends each vector $v\in I_x$ to the covector $w\in T_x^*M$ for which 
$w(v)=1$ and the kernel $\ker w\subset T_xM$ is parallel to the tangent hyperplane $T_vI_x\subset T_xM$. 
The latter pair of conditions is equivalent to the pair of conditions  $w(v)=1$ and  $|w(u)|\leq1$ for every $u\in I_x$. The hypersurface $J_x:=D(I_x)\subset T_x^*M$ is a closed convex centrally-symmetric hypersurface, which is called the {\it figuratrix} or the {\it dual to the indicatrix $I_x$}. The equivalent Finsler reflection law states that $u\in I_x$ {\it is reflected to} 
$v\in I_x$ with respect to a codimension one subspace $H=T_xS\subset T_xM$, if the linear functional 
$D(u)-D(v)\in T_x^*M$ vanishes on $H$. Recall that the {\it Legendre transformation is involutive:} the inverse 
$D^{-1}:J_x\to I_x$ coincides with the Legendre transformation defined by the hypersurface $J_x$ instead of 
$I_x$. 

b) The vector $u\in I_x$ is reflected to $v\in I_x$, if the hyperplanes tangent to $I_x$ at $u$ and at $v$ 
respectively and the reflecting hyperplane $H$ are concurrent: that is, either they are parallel, or they intersect themselves by a codimension two affine subspace in $T_xM$. See Figure 3.  

\begin{figure}[ht]
  \begin{center}
   \epsfig{file=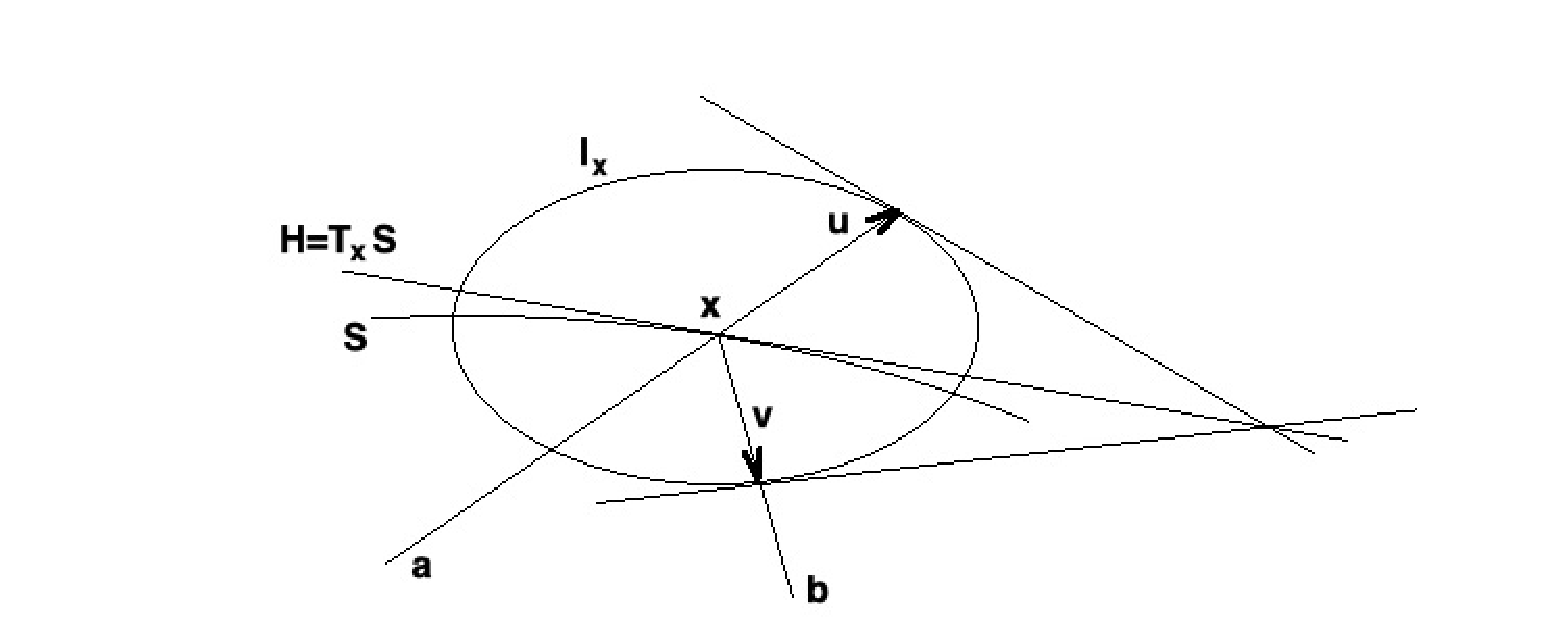, width=30em}
    \caption{Finsler reflection law.}
    \label{fig:1}
  \end{center}
  \end{figure}

Let us now consider that our Finsler manifold $M$ is a domain in $\rr^n$ equipped with some Finsler structure. 
For every $x\in M$ we identify the cotangent space $T_x^*M$ with the corresponding tangent space $T_xM$ via the standard Euclidean scalar product. Then the figuratrix $J_x$ is identified with a centrally-symmetric strictly 
convex closed hypersurface in $T_xM$, and the Legendre transform $D$ sends each vector $u\in I_x$ to 
the vector $D(u)\in T_xM$ orthogonal to $T_uI_x$ and such that $<D(u),u>=1$. In other terms, {\it the Legendre transform} $D:I_x\to J_x$ is {\it the Gauss map} multiplied by a positive function. 

Given a strictly convex closed surface $S\subset M$, we can define the {\it Finsler billiard} on $S$ acting in the space of oriented Finsler geodesics intersecting $S$. Namely, take an oriented geodesic $\ell$ intersecting $S$ 
and its last intersection point $q$ with $S$ in the sense of orientation. Its reflection image is the geodesic 
$\ell^*$ issued from $q$, directed inside the convex domain bounded by $S$. 

The next proposition is well-known and follows immediately from definition, involutivity and the above Gauss map interpretation of the Legendre transform $D:I_x\to J_x$. 
  
  \begin{proposition} \label{profinsler} (see, e.g., \cite[p. 2011]{ako1}). 
  Let $T\subset\rr^n$ be a strictly convex bounded centrally symmetric convex body with smooth boundary. 
  Then for every strictly convex bounded body $K\subset\rr^n$ with smooth boundary 
  the $T$-billiard in $K$ (with reflection law given in 
  Definition \ref{deftbil}) is the Finsler billiard  in the Minkowski Finsler space, where the indicatrix $I_0$ is  dual to the boundary $\partial T$. Namely, 
  $\partial T=J_0=D(I_0)$ (after translation of the body $T$ so that its center of symmetry becomes the origin). 
  \end{proposition}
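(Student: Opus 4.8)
The plan is to compare the two reflection laws directly at a fixed boundary point $q\in\partial K$, translating each into a condition on the endpoints of a chord of $\partial T$, and then observe that the two conditions literally coincide. First I would recall, from the discussion at the beginning of the section and from Proposition \ref{proeq}, the explicit form of the $T$-billiard reflection $R_{K,q}$: it sends the oriented line of direction $\vec n(A)$ to the oriented line of direction $\vec n(B)$, where $A,B\in\partial T$ are the two endpoints of the chord parallel to the normal $\vec n(q)$ to $\partial K$ at $q$, and $\vec n(A),\vec n(B)$ are the exterior normals to $\partial T$. Thus on the $(K,T)$-side the reflection is encoded by the rule ``$A\mapsto B$ along a chord parallel to $\vec n(q)$, transported to line directions through the Gauss map $p\mapsto\vec n(p)$ of $\partial T$''.

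Next I would unwind the Finsler reflection law in the Euclidean identification $T_x^*M\cong T_xM$. After translating the center of symmetry of $T$ to the origin we have $\partial T=J_0=D(I_0)$, so $I_0=D^{-1}(\partial T)$ is the indicatrix dual to $\partial T$. By the Gauss-map interpretation recalled in the excerpt, $D\colon I_0\to\partial T$ is the Gauss map of $I_0$ scaled by a positive function, and by involutivity of the Legendre transform $D^{-1}\colon\partial T\to I_0$ is the Gauss map of $J_0=\partial T$ scaled by a positive function. Hence, for $A\in\partial T$ the vector $u:=D^{-1}(A)\in I_0$ is orthogonal to $T_A\partial T$ and satisfies $\langle u,A\rangle=1>0$; since $\langle\vec n(A),A\rangle>0$ for the convex body $T$ containing the origin, $u$ is positively proportional to $\vec n(A)$. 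This is exactly the dictionary needed: the direction of the Finsler unit vector $u$ is the direction $\vec n(D(u))$ of the corresponding $T$-billiard line.

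I would then put the two laws side by side. Form (a) of the Finsler reflection law says that $u\in I_0$ is reflected to $v\in I_0$ across $H=T_q\partial K$ exactly when the covector $D(u)-D(v)$ annihilates $H$, i.e.\ when $D(u)-D(v)$ is parallel to $\vec n(q)$. Writing $A=D(u)$ and $B=D(v)$, this says precisely that the chord $AB$ of $\partial T$ is parallel to $\vec n(q)$ --- the same chord condition as on the $(K,T)$-side. Combining this with the dictionary above, the incoming Finsler direction $u$ corresponds to the $T$-billiard line of direction $\vec n(A)$, the reflected direction $v$ to that of direction $\vec n(B)$, and the two reflection rules agree. Since a Minkowski structure is projectively flat its geodesics are straight lines, so both billiards act on the same space of oriented lines meeting $K$ and both reflect at the last intersection point with $\partial K$; hence the two billiard maps coincide.

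Finally I would check compatibility of the orientation conventions, which is the only point requiring care. If $A-B=t\,\vec n(q)$ with $t>0$, then at the point $A$, extremal in the direction $\vec n(q)$, one has $\langle\vec n(A),\vec n(q)\rangle>0$, while at $B$ one has $\langle\vec n(B),\vec n(q)\rangle<0$; thus $u\sim\vec n(A)$ and $v\sim\vec n(B)$ lie on opposite sides of $H=T_q\partial K$, with $u$ pointing out of $K$ and $v$ pointing into $K$. This matches both the ``different sides'' hypothesis of the Finsler reflection law and the out/in orientation prescription in the definition of the $T$-billiard. The main --- and essentially only --- obstacle is the bookkeeping: correctly invoking involutivity of the Legendre transform to turn the figuratrix identity $\partial T=J_0$ into the Gauss-map identification $u\sim\vec n(D(u))$, and tracking signs so that the ``last intersection point'' and the in/out orientations line up. No genuine computation beyond this is needed.
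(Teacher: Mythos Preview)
Your argument is correct and is precisely the approach the paper indicates: the paper does not spell out a proof but states that the proposition ``follows immediately from definition, involutivity and the above Gauss map interpretation of the Legendre transform $D:I_x\to J_x$'', and you have simply unpacked those three ingredients in detail. Your careful bookkeeping with the Gauss-map dictionary $u\sim\vec n(D(u))$ via involutivity, the chord condition $D(u)-D(v)\parallel\vec n(q)$, and the in/out orientation check is exactly what is implicit in the paper's one-line justification.
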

  
  Theorem \ref{prodim2} together with Proposition \ref{profinsler} imply the following corollaries.
  
  \begin{corollary} \label{cor1} Let in a Minkowski Finsler space $\rr^n$ the Finsler reflection law be projective. 
  Then the Finsler unit sphere is an ellipsoid. In particular, there exists an affine transformation $\rr^n\to\rr^n$ 
  sending the Minkowski Finsler structure to the standard Euclidean metric and thus, 
  sending orbits of each Finsler billiard  to orbits of an Euclidean billiard with the standard reflection law. 
  \end{corollary}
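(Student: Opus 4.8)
The plan is to realize the given Minkowski Finsler billiard as a $(K,T)$-billiard and then apply Theorem~\ref{prodim2}, finishing by a duality argument between the indicatrix and the figuratrix. Let $I_0\subset\rr^n$ denote the indicatrix (the Finsler unit sphere) and let $J_0=D(I_0)$ be its figuratrix, the image of $I_0$ under the Legendre transform. I would take $T$ to be the convex body bounded by the figuratrix, so that $\partial T=J_0$, and let $K$ be an arbitrary strictly convex body with smooth boundary, for instance a round ball. By Proposition~\ref{profinsler} the Finsler billiard in the given Minkowski space coincides with the $T$-billiard in $K$; in particular, for each $q\in\partial K$ the Finsler reflection with respect to the tangent hyperplane $T_q\partial K$ is exactly the $T$-billiard reflection $R_{K,q}$, and the whole family of these reflections is the map $I_K$.

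The next step is to read the hypothesis correctly. Since the Finsler structure is translation invariant, its reflection law is governed by the single indicatrix $I_0$, and the assumption that this law is projective says precisely that each reflection $R_{K,q}$, viewed on the space of oriented lines through $q$, is the lift of a projective involution of $\rp^{n-1}$ fixing the projectivized tangent hyperplane $\mathbb P(T_q\partial K)$ pointwise. A projective involution of $\rp^{n-1}$ fixing a hyperplane pointwise is necessarily a harmonic homology, hence possesses a well-defined center; this center is a line through $q$ and provides a transversal line field $\mcn$ on $\partial K$ for which $R_{K,q}$ is the projective billiard reflection in the sense of the Definition. Thus $I_K$ is a projective billiard reflection, as required by the hypothesis of Theorem~\ref{prodim2}.

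To apply that theorem I must also check its regularity hypotheses on $\partial T=J_0$. The indicatrix $I_0$ is a smooth, closed, strictly convex hypersurface, and the Legendre transform of such a hypersurface is again smooth, closed and strictly convex; hence $J_0$ is $C^6$-smooth, closed and has positive definite second fundamental form. Theorem~\ref{prodim2} then yields that $\partial T=J_0$ is an ellipsoid. Finally I would dualize: the figuratrix is the dual of the indicatrix under the (involutive) Legendre transform, and in suitable coordinates an ellipsoid indicatrix $\{\langle Av,v\rangle=1\}$ has figuratrix $\{\langle A^{-1}w,w\rangle=1\}$, which is again an ellipsoid; by involutivity the converse holds as well. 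Therefore $J_0$ being an ellipsoid forces the indicatrix $I_0$, i.e.\ the Finsler unit sphere, to be an ellipsoid. For the concluding ``in particular'' clause, writing $I_0=\{\langle Av,v\rangle=1\}$ with $A$ positive definite and taking the linear map $F=A^{1/2}$, one verifies $|Fv|=L(v)$, so $F$ is an isometry from the Minkowski space onto Euclidean $\rr^n$; being linear, $F$ carries lines to lines and conjugates each Finsler billiard into the corresponding Euclidean billiard with the standard reflection law.

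I expect the main obstacle to be the second step: matching the paper's precise notion of a \emph{projective billiard reflection} --- a harmonic homology carrying a genuine transversal line field --- with the a priori weaker statement that the reflection is merely a projective involution, together with verifying that the Legendre transform preserves the $C^6$ regularity and strict convexity needed to invoke Theorem~\ref{prodim2}. Once these points are settled, the corollary follows by assembling Proposition~\ref{profinsler}, Theorem~\ref{prodim2}, and the self-duality of ellipsoids.
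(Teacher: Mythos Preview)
Your proposal is correct and follows essentially the same route as the paper: identify the Minkowski Finsler billiard with a $T$-billiard via Proposition~\ref{profinsler} where $T$ is bounded by the figuratrix $J_0$, apply Theorem~\ref{prodim2} to conclude $J_0$ is an ellipsoid, and then dualize using involutivity of the Legendre transform to deduce the indicatrix $I_0$ is an ellipsoid. You are somewhat more explicit than the paper about the harmonic-homology justification that the projective reflection hypothesis yields a genuine projective billiard structure on $\partial K$, and about the final affine normalization via $A^{1/2}$, but these are elaborations of the same argument rather than a different approach.
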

  \begin{proof} Let $J=J_0=D(I_0)$ denote the figuratrix: the closed hypersurface dual to the unit sphere of the Finsler structure. Let $T$ denote the convex body bounded by $J$. For every bounded strictly convex body  $K\subset\rr^n$ the $(K,T)$-billiard is projected to 
  the $T$-billiard. The $T$-billiard reflection law is the Minkowski Finsler reflection law (Proposition 
  \ref{profinsler}). If it is projective, then $\partial T$ is an ellipsoid (Theorem \ref{prodim2}): 
  $\partial T=E:=\{<Ax,x>=1\}$, where $A$ is a symmetric positive definite matrix. But the dual to an ellipsoid $E$ 
    is the ellipsoid $E^*:=\{<A^{-1}x,x>=1\}$. Indeed, for every $x\in E$ the vector $Ax$ is normal to $T_xE$, 
    being half-gradient of the quadratic form $<Ax,x>$. Its scalar product with $x$ is equal to 
    $<Ax,x>=1$, since $x\in E$. Hence, $Ax=D(x)$. On the other hand, $<A^{-1}(Ax),Ax>=<x,Ax>=1$, since the matrix $A$ is symmetric. Therefore, $Ax\in E^*$. Finally, the hypersurface $D(\partial T)$ is an ellipsoid. It 
    coincides with the indicatrix $I_0$, by involutivity of the Legendre transform. 
    Corollary \ref{cor1} is proved.
    \end{proof} 
  
  \begin{corollary} \label{cor2} Let $U\subset\rr^n$ be a simply connected domain equipped with a Finsler structure. 
  Let the corresponding Finsler reflection law be projective. Then the Finsler structure is Riemannian, i.e.,  
  the corresponding unit spheres are ellipsoids.
  \end{corollary}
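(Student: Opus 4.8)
The plan is to reduce Corollary~\ref{cor2} to its Minkowski special case, Corollary~\ref{cor1}, by freezing the Finsler structure at one point at a time. The crucial observation is that the Finsler reflection law is a \emph{pointwise} notion. Indeed, by the Gutkin--Tabachnikov description recalled above (items (a) and (b)), the reflection involution at a point $x\in U$ acting on directions in $T_xU$ is expressed purely through the Legendre transform $D\colon I_x\to J_x$ and the incidence of tangent hyperplanes to the indicatrix $I_x$ with the reflecting hyperplane $H=T_xS$; it depends only on $x$ and $H$, and is independent of the reflecting hypersurface $S$ and of the values of the Finsler structure at neighboring points. Since $U\subset\rr^n$, each tangent space $T_xU$ carries a canonical linear, hence projective, structure, so ``the reflection involution at $x$ is projective'' is a well-defined condition depending only on $I_x$. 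I read the hypothesis ``the Finsler reflection law is projective'' as: for every $x\in U$ there is a transversal line $\mcn(x)\subset T_xU$ such that for every reflecting hyperplane $H\subset T_xU$ the Finsler reflection with respect to $H$ is the lifting of the projective involution of $\mathbb P(T_xU)\cong\rp^{n-1}$ fixing $\mathbb P(H)$ and the point $\mcn(x)$.

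First I would fix an arbitrary point $x_0\in U$ and introduce the \emph{frozen} Minkowski Finsler structure on $\rr^n$ whose indicatrix is the constant family obtained by transporting $I_{x_0}$ to every tangent space by the translations of $\rr^n$. This is a genuine (smooth, translation invariant) Minkowski structure, and by translation invariance its reflection involution at every point equals the reflection involution determined by $I_{x_0}$, which is precisely the original Finsler reflection at $x_0$. Consequently the projectivity of the original reflection at $x_0$, together with the transversal direction $\mcn(x_0)$ transported as a constant transversal field, shows that the reflection law of the frozen Minkowski structure is projective in the sense required by Corollary~\ref{cor1}.

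Then I would apply Corollary~\ref{cor1} to the frozen Minkowski structure and extract its conclusion that the corresponding unit sphere, namely $I_{x_0}$, is an ellipsoid (I only need this part of Corollary~\ref{cor1}, not the global affine normalization, which would depend on $x_0$). Since the indicatrix is centrally symmetric about the origin, it is the unit sphere of a positive definite quadratic form $g_{x_0}$ on $T_{x_0}U$. As $x_0\in U$ was arbitrary, every indicatrix $I_x$ is such an ellipsoid; the forms $g_x$ vary smoothly with $x$ because the family $I_x$ does and the ellipsoid-to-quadratic-form correspondence is smooth, so they define a Riemannian metric whose associated Finsler structure is the given one. This proves that the Finsler structure is Riemannian.

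The main obstacle is conceptual rather than computational: one must verify carefully that the reflection law is genuinely local, i.e.\ that the Finsler reflection at $x$ is insensitive to the indicatrices at other points, so that freezing the structure at $x_0$ leaves the reflection there unchanged. This is exactly what items (a) and (b) provide, since they describe the reflection solely in terms of $I_{x_0}$, its figuratrix $J_{x_0}=D(I_{x_0})$, and the reflecting hyperplane. A secondary point to confirm is that the transversal line field $\mcn$ supplied by the projectivity hypothesis depends only on $x$ and not on the reflecting hypersurface; this is automatic, because the reflection itself depends only on $x$ and $H$, so a single direction $\mcn(x)$ must govern all hyperplanes $H$ through $x$.
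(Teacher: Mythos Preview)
Your proposal is correct and follows essentially the same route as the paper: freeze the Finsler structure at a point $x_0$ to obtain a Minkowski structure, observe that the reflection involution at $x_0$ is unchanged because it depends only on $I_{x_0}$ and the hyperplane $H$, and then invoke Corollary~\ref{cor1} to conclude that $I_{x_0}$ is an ellipsoid. The paper's proof is a terse two-line version of exactly this argument; your only deviation is the side claim that a single transversal direction $\mcn(x)$ governs all hyperplanes $H$, which is not quite justified by the reasoning you give (the reflection depending on $x$ and $H$ allows $\mcn$ to depend on $H$ as well), but this remark is unnecessary for the proof and can simply be dropped.
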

  
  \begin{proof} At each point $q\in U$ for every codimension one vector subspace $H\subset T_q\rr^n$ 
   the involution defining the Finsler reflection law with respect to $H$ is the same, as in the Minkowski Finsler 
   space $\rr^n$ with the indicatrix $I_0$ being translated image of the indicatrix $I_q$. Therefore, if 
   the reflection law in $T_q\rr^n$ is projective, then this is also the case in the  
   Minkowski space. Hence, the corresponding indicatrix $I_0$ is an ellipsoid (Corollary \ref{cor1}). 
   This proves  Corollary \ref{cor2}. 
   \end{proof}

    \begin{corollary} \label{cor3} Let $U\subset\rr^n$ be a simply connected domain equipped with a projectively flat Finsler structure. Let the corresponding Finsler reflection law be projective. Then (up to rescaling the metric by constant factor) $U$ is isometric to 
    a domain in a $n$-dimensional space form of constant curvature: either Euclidean space $\rr^n$, or the unit  
    sphere $S^n$, or the hyperbolic space $\hh^n$.
    \end{corollary}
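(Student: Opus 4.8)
The plan is to obtain Corollary \ref{cor3} as a direct concatenation of Corollary \ref{cor2} with the Beltrami Theorem quoted in Subsection 1.2. The hypotheses provide a simply connected domain $U\subset\rr^n$ carrying a projectively flat Finsler structure whose Finsler reflection law is projective, and the two quoted results together bridge exactly the gap between these hypotheses and the conclusion.

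First I would apply Corollary \ref{cor2}: since the Finsler reflection law is projective on the simply connected domain $U$, the corresponding indicatrices $I_q$ are all ellipsoids, hence the Finsler structure is in fact Riemannian. Let $g$ denote the resulting Riemannian metric, whose unit spheres are precisely the ellipsoidal indicatrices. Next I would observe that projective flatness is inherited: by hypothesis every Finsler geodesic of the original structure is a straight line segment, and the geodesics of the Riemannian metric $g$ coincide with these Finsler geodesics, so $g$ is a projectively flat Riemannian metric on $U$. Finally I would invoke the Beltrami Theorem, which asserts that a simply connected domain in $\rr^n$ equipped with a projectively flat Riemannian metric is isometric, up to rescaling the metric by a constant factor, to a domain in a space form of constant curvature, namely $\rr^n$, $S^n$, or $\hh^n$. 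This is the desired statement.

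Since both ingredients are available, no substantial obstacle is expected; the one point deserving care is the passage from \emph{projectively flat Finsler} together with \emph{Riemannian} to \emph{projectively flat Riemannian} in exactly the form that Beltrami's theorem requires. Concretely, one must confirm that recognizing the indicatrices as ellipsoids (which turns the Finsler norm into the norm of an inner product) does not alter the underlying geodesics, so that the line-geodesic property genuinely transfers to the metric $g$ whose unit spheres those ellipsoids are. Once this identification is made explicit, the conclusion follows immediately from the Beltrami Theorem.
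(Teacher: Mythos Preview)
Your proposal is correct and follows essentially the same route as the paper: apply Corollary \ref{cor2} to deduce that the Finsler structure is Riemannian, note that it remains projectively flat by assumption, and then invoke the Beltrami Theorem. The point you flag for care---that the geodesics of the Riemannian metric $g$ coincide with the Finsler geodesics---is immediate, since once the indicatrices are ellipsoids the Finsler structure \emph{is} the Riemannian metric $g$; the paper simply absorbs this into the phrase ``by assumption.''
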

    \begin{proof} The Finsler structure is Riemannian (Corollary \ref{cor2}). The simply connected Riemannian manifold $U$ is projectively flat, by assumption. Therefore, it is isometric (up to rescaling the metric by constant 
    factor) to a domain in a space form of constant curvature (Beltrami Theorem, see  
    \cite[section VI.2]{schouten}). Corollary \ref{cor3} is proved.
    \end{proof}

\section{Proof of Theorem \ref{prodim2'} in two dimensions}  

In this section  we consider that $n=2$; thus, $\alpha$ is a planar curve.

Fix a point $p\in\alpha$. Let $\Gamma=\Gamma_p$ be the osculating conic for the curve $\alpha$ at $p$. This is the unique conic tangent to $\alpha$ with contact of order greater than 4, that is, having the same 4-jet, as $\alpha$ at $p$. 

Recall that a point $p$ of a curve $\alpha$ is called {\it sextactic,} if its osculating conic 
at $p$ has contact of order greater than five with $\alpha$, see \cite[p.12]{buchin}. 

The proof of Theorem \ref{prodim2'} is based on the following well-known theorem. 

\begin{theorem} \label{tconic} If all the points of a connected planar curve $\alpha$ are sextactic, then $\alpha$ is a conic (or a connected component of a hyperbola). 
\end{theorem}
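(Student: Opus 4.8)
The plan is to linearize the problem through the quadratic Veronese embedding, which converts ``osculating conic'' into ``osculating hyperplane'' and ``sextactic point'' into the vanishing of a Wronskian. Concretely, let $v:\rp^2\to\rp^5$ be the Veronese map sending $[x_1:x_2:x_3]$ to its six degree-two monomials. A conic $C=\{Q=0\}\subset\rp^2$ is exactly the $v$-preimage of a hyperplane $\{\ell_Q=0\}\subset\rp^5$, where $\ell_Q$ is the linear form on $\rr^6$ corresponding to the quadratic form $Q$; this sets up a linear isomorphism between the five-dimensional family of conics and the family of hyperplanes in $\rp^5$. I would lift $\alpha$ to $\wt\alpha=v\circ\alpha$, choosing a local homogeneous representative $c(t)\in\rr^6\setminus\{0\}$.

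The first step is to match contact orders. Since $v$ is an embedding, the order of contact of $\alpha$ with a conic $C$ at $p$ equals the order of contact of $\wt\alpha$ with the associated hyperplane at $v(p)$. Hence the osculating conic of $\alpha$ at $p$ corresponds to the osculating hyperplane of $\wt\alpha$, namely the one spanned by $c,c',c'',c''',c''''$; this identification presumes those five vectors are linearly independent, which holds wherever $\alpha$ has a genuine unique osculating conic meeting it with contact exactly $4$. That nondegeneracy is automatic at points of nonzero curvature, and in the application it is guaranteed by the positive definiteness of the second fundamental form in Theorem \ref{prodim2}, which prevents $\alpha$ from degenerating to (a piece of) a line. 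The second step rephrases the hypothesis: a point $p$ is sextactic exactly when the osculating hyperplane of $\wt\alpha$ achieves contact of order greater than five at $v(p)$, i.e. when $c^{(5)}(t)$ lies in the span of $c,\dots,c^{(4)}$, equivalently when the Wronskian $W(t)=\det[c(t),c'(t),\dots,c^{(5)}(t)]$ vanishes. A short computation shows $W$ changes only by a nonvanishing factor under a different choice of homogeneous lift or reparametrization, so $W\equiv0$ is intrinsic, and this is precisely the assumption that every point of $\alpha$ is sextactic.

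For the final step I would show that an everywhere-stationary osculating hyperplane is in fact constant. Writing $n(t)$ for a unit normal to the osculating hyperplane $V(t)=\operatorname{span}(c,\dots,c^{(4)})$, one has $\langle c^{(k)},n\rangle=0$ for $k=0,\dots,5$ (the last using $c^{(5)}\in V$). Differentiating the relations for $k=0,\dots,4$ and cancelling the terms $\langle c^{(k+1)},n\rangle=0$ gives $\langle c^{(k)},n'\rangle=0$ for $k=0,\dots,4$, so $n'$ is orthogonal to all of $V$ and hence parallel to $n$; since $\lvert n\rvert\equiv1$ this forces $n'\equiv0$. Thus $n$ is constant, $\wt\alpha$ lies in one fixed hyperplane of $\rp^5$, and therefore $\alpha$ lies on the corresponding fixed conic $\{Q=0\}$. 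Connectedness of $\alpha$ then yields a single conic, or a connected arc of one, which accounts for the ``connected component of a hyperbola'' case in the statement.

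The step I expect to be the real obstacle is not the final normal-vector argument (routine once the setup is in place) but the uniform nondegeneracy in the first step: ensuring that $c,c',c'',c''',c''''$ are linearly independent along all of $\alpha$, so that the osculating conic is well defined and the Veronese dictionary applies at every point. This is exactly where nonvanishing curvature, or the positive-definite second fundamental form of the ambient application, must be invoked; any degenerate points would have to be excluded or treated separately before the Wronskian criterion can be used globally.
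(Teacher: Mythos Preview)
Your proof is correct and takes a genuinely different route from the paper. The paper's argument is a two-line appeal to affine differential geometry: the sextactic points of a curve are exactly the critical points of its affine curvature (citing \cite{buchin}), so if every point is sextactic then the affine curvature is constant, and a curve of constant affine curvature is a conic. Your approach instead linearizes via the Veronese embedding $v:\rp^2\to\rp^5$, translates ``osculating conic'' into ``osculating hyperplane,'' and then shows by direct differentiation of the unit normal that a curve in $\rp^5$ whose osculating hyperplane has contact of order at least six everywhere must lie in a fixed hyperplane. The paper's version is shorter because it outsources all the work to the existing machinery of affine curvature; yours is self-contained and makes the underlying mechanism (the Veronese dictionary between conics and hyperplanes) explicit, at the cost of having to verify the nondegeneracy of $c,c',\dots,c^{(4)}$. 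That last point is less of an obstacle than you suggest: writing the curve locally as $(t,f(t))$ with $f(0)=f'(0)=0$ and $f''(0)\neq0$ and computing the $5\times6$ matrix of $c,c',\dots,c^{(4)}$ at $t=0$ shows immediately that its rank is $5$ whenever $f''(0)\neq0$, so nonvanishing Euclidean curvature already suffices, and this is guaranteed in the paper's application.
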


\begin{proof} The sextactic points are exactly those points where the affine curvature 
of the curve has zero derivative  \cite[p. 12, formula (3.33)]{buchin}. Therefore, $\alpha$ has constant 
affine curvature, and hence,  is a conic, see \cite[pp. 9-10]{buchin}. 
\end{proof} 

Therefore, for the proof of Theorem \ref{prodim2'} for $n=2$ it suffices to prove the following lemma. 

\begin{lemma} \label{losc} Fix an arbitrary 
point $p\in\alpha$. Let $L_p$ denote the line tangent to 
$\alpha$ at $p$. 
Let $\mcr_\mcl$ be the involution  (\ref{jmcl}) defined by 
 $\partial T=\alpha$ and the class $\mcl$ of lines parallel to $L_p$. Let 
 $\mcr_{\mcl}$ be the lifting of a projective involution of $\rp^1$. Then  $p$ is a sextactic point of the curve 
 $\alpha$. 
\end{lemma}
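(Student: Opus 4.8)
The plan is to reduce the statement to a one–variable formal computation after passing from normal directions to tangent directions. Place $p$ at the origin with $L_p$ the horizontal axis and write $\alpha$ locally as a graph $y=f(x)$ with $f(0)=f'(0)=0$, $f''(0)>0$; after an affine rescaling of the axes (which preserves conics, the order of contact, and projectivity of $\mcr_\mcl$) I may assume $f''(0)=1$, so that $f(x)=\tfrac12x^2+\tfrac{a_3}{6}x^3+\tfrac{a_4}{24}x^4+\tfrac{a_5}{120}x^5+\cdots$. A line $\La\in\mcl$ is a horizontal line $y=c$, whose two intersection points $A,B$ with $\alpha$ carry tangent slopes $s(A)=f'(x_A)$ and $s(B)=f'(x_B)$. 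Since the exterior normal direction is the tangent direction rotated by a right angle, i.e.\ by the fixed element $m\mapsto-1/m$ of $PGL_2(\rr)$ acting on the $\rp^1$ of directions, the involution $\mcr_\mcl$ on normals is projectively conjugate to the involution $\tau$ on tangent slopes sending $s(A)\mapsto s(B)$. Hence $\mcr_\mcl$ descends to a projective involution of $\rp^1$ if and only if $\tau$ is a projective (Möbius) involution; note that $\tau(0)=0$ and $\tau'(0)=-1$.

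Next I would linearize the chord involution by the Morse lemma: there is a local diffeomorphism $w=\phi(x)$ with $\phi(0)=0$, $\phi'(0)=1$ and $2f(x)=\phi(x)^2$, so that $f(\bar x)=f(x)$ with $\bar x\neq x$ is equivalent to $\phi(\bar x)=-\phi(x)$. Thus in the coordinate $w$ the horizontal–chord involution is simply $w\mapsto-w$. Writing $x=\psi(w):=\phi^{-1}(w)$, the tangent slope along the curve is $s=f'(x)=\phi(x)\phi'(x)=w/\psi'(w)=:W(w)$, a local diffeomorphism with $W(0)=0$, $W'(0)=1$. Consequently $\tau=W\circ(-\mathrm{id})\circ W^{-1}$, i.e.\ $\tau$ is the $W$-conjugate of $w\mapsto-w$.

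I would then characterize when this conjugate is Möbius. Because $w\mapsto-w$ commutes with every odd power series, its conjugate $\tau$ is Möbius if and only if $W$ factors as $M\circ O$ with $M$ Möbius and $O$ odd. To extract a concrete condition, normalize $W$ by the unique Möbius transformation $N$ fixing $0$ with $N'(0)=1$ that kills the quadratic term, obtaining $\widetilde W=N\circ W=w+A_3w^3+A_4w^4+A_5w^5+\cdots$ with $A_4=\mu_4+\mu_2^3-2\mu_2\mu_3$, where $\mu_i$ are the Taylor coefficients of $W$; the stabilizer of this normalization is trivial. Feeding the factorization $W=M\circ O$ into the normalization forces the normalizing Möbius map to be a pure scaling, so $\widetilde W$ is itself odd. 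In particular its first even coefficient vanishes, giving $A_4=0$.

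Finally I would identify $A_4=0$ with sextacticity via the osculating conic. An explicit expansion of $\phi$, $\psi$ and $W$ to fourth order shows that $\mu_2,\mu_3$ depend only on $(a_3,a_4)$, whereas $\mu_4$ depends on $(a_3,a_4,a_5)$ and is linear in $a_5$ with a nonzero coefficient; hence $A_4=\kappa\,a_5+Q(a_3,a_4)$ with $\kappa\neq0$. The osculating conic $\Gamma_p$ shares the $4$-jet of $\alpha$ (same $a_2,a_3,a_4$), and being a conic its associated involution is projective by Proposition \ref{cell} and Corollary \ref{cquad}, so that $A_4^{\Gamma_p}=0$ and therefore $A_4^{\alpha}=\kappa\,(a_5^{\alpha}-a_5^{\Gamma_p})$. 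Thus $\tau$ Möbius yields $A_4^{\alpha}=0$, whence $a_5^{\alpha}=a_5^{\Gamma_p}$; combined with the coincidence of $4$-jets this means $\alpha$ and $\Gamma_p$ share the $5$-jet at $p$, i.e.\ $p$ is sextactic. The main obstacle is the explicit fourth-order series computation certifying $\kappa=\partial A_4/\partial a_5\neq0$, together with the bookkeeping of the normal-form reduction; once $\kappa\neq0$ is established, the conic comparison makes the sextactic conclusion immediate.
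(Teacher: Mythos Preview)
Your argument is correct and complete in outline; the flagged computation $\partial A_4/\partial a_5\neq0$ does go through (one finds $\partial\mu_4/\partial a_5=1/30$ by tracking $a_5$ through $\phi_4\to p_4\to$ the $w^3$-coefficient of $1/\psi'$, while $\mu_2,\mu_3$ are independent of $a_5$).

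Your route differs genuinely from the paper's. Both proofs compare $\alpha$ with its osculating conic $\Gamma$ and both use Corollary~\ref{cquad} to know that the $\Gamma$-involution is projective. The paper then works directly with the two germs $f=\mcr_{\alpha,\mcl}$ and $g=\mcr_{\Gamma,\mcl}$: by elementary asymptotics on the graphs it shows $f(t)-g(t)=8ct^4(1+o(1))$ with $c$ the fifth-order discrepancy, hence $f$ and $g$ share their $2$-jet but differ, and then invokes the one-line rigidity fact (Proposition~\ref{jetneq}) that two projective involutions with a common fixed point and equal $2$-jets coincide. You instead linearize the chord involution by the Morse lemma, realize the slope involution as a conjugate $W\circ(-\mathrm{id})\circ W^{-1}$, and extract a M\"obius-normal-form obstruction $A_4$; vanishing of $A_4$ is then compared between $\alpha$ and $\Gamma$. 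The paper's approach buys simplicity: the only projective input is the $2$-jet rigidity of M\"obius involutions, and the asymptotic bookkeeping (Claims~1--2) is short. Your approach buys a cleaner structural picture---the M\"obius normal form produces a hierarchy of invariants $A_{2k}$, of which $A_4$ is the first, and the conic comparison reduces sextacticity to a single linear equation---at the cost of a longer fourth-order series expansion. Both are of comparable length once written out in full.
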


\begin{proof} We prove the lemma by contradiction. Suppose the contrary: at some point $p\in\alpha$ 
the conic $\Gamma=\Gamma_p$ and the curve $\alpha$ are locally graphs of functions that 
differ at 5-th Taylor terms. For both of them consider the corresponding  involutions $\mcr_\mcl$, 
$$f:=\mcr_{\alpha,\mcl}, \  \ g:=\mcr_{\Gamma,\mcl},$$
defined by the class $\mcl$ of lines parallel to $L_p$ and by the curves $\alpha$ and $\Gamma$ 
respectively. These are germs of one-dimensional $C^5$-smooth involutions at their common 
fixed point $O$ corresponding to the normal vector $\vec n(p)$.  We prove the following proposition, which is the main part of the proof of 
the lemma.
\begin{proposition} \label{jets=} In the above assumptions the involutions $f$ and $g$ have the same 2-jet at $O$ 
(and even the same 3-jet), but $f\not\equiv g$. 
\end{proposition}

The involutions $f$ and $g$  are both liftings of projective involutions. This follows from assumptions for the involution $f$, which is defined by the curve $\alpha$, and by Corollary \ref{cquad} for 
the involution $g$, which is defined by the conic $\Gamma$.  This together with the next 
proposition on a general pair of projective involutions will bring us to contradiction with Proposition 
\ref{jets=}. 

\begin{proposition} \label{jetneq} Two arbitrary projective involutions $\rp^1\to\rp^1$ with a common fixed point 
either coincide, or have different 2-jets at the fixed point.
\end{proposition}

\begin{proof} Let  $f$, $g$ denote the involutions in question, and let $O$ denote their common fixed point.  
The map $f$ has yet another fixed point $A$. 
In an affine chart $\rr_{t}$ on $\rp^1\setminus\{ A\}$ centered at $O$  
one has 
$$f(t)=-t, \ \ g(t)=-\frac t{1+ct}=-t+ct^2+O(t^3), \ \text{ as } x\to0.$$
One has $c\neq0$, if and only if $f\neq g$. In this case the germs $f$ and $g$ obviously have 
different 2-jets. The proposition is proved.
\end{proof}

\begin{proof} {\bf of Proposition \ref{jets=}.} The involutions $f$ and $g$ are defined to permute the 
exterior unit normal vectors to $\alpha$ and $\Gamma$ respectively at points of intersection of 
a line $\ell$ parallel to $L_{p}$ and the curve in question. The exterior normal vector $\vec n(y)$ 
to the underlying curve at a point $y$ will be identified with the oriented tangent line $L_y$ to the curve at $y$. 
Its orienting vector  is  obtained from the normal vector $\vec n(y)$ by  rotation by $\frac\pi2$. Both 
$\vec n(y)$ and $L_y$ will be identified with the 
slope of the line $L_y$: the tangent of its azimuth with respect to the line $L_p$. 
Thus, we can and will consider that the involutions $f$ and $g$ act on the slopes of the oriented  
tangent lines to the underlying curves. 

Let us take positively oriented orthogonal affine coordinates $(x,y)$ centered at $O=p$ so that the $x$-axis coincides with the oriented tangent line $L_p=L_O$. Then the curves $\alpha$ and $\Gamma$ both lie in the upper half-plane (by choice of orientation). We can and will consider them as  graphs of functions:
$$\alpha=\{ y=h_\alpha(x)\}, \ \ \Gamma=\{ y=h_\Gamma(x)\}, \ h_\alpha(x)\simeq h_\Gamma(x)
\simeq\frac12x^2, \text{ as } x\to0.$$
Indeed, the latter asymptotics a priori holds with $\frac12x^2$ replaced by $\nu x^2$ with $\nu>0$. 
One can achieve that $\nu=\frac12$ by applying a homothety, which does not change 
our projectivity and jet (in)equality assumptions. One has 
\begin{equation} h_\alpha(x)=h_{\Gamma}(x)+cx^5+o(x^5), \ \ c\neq0,\label{hc5}\end{equation}
since the 5-jets of the curves $\alpha$ and $\Gamma$ are supposed to be distinct, while their  
4-jets are equal. See Figure 4. 
\begin{figure}[ht]
  \begin{center}
   \epsfig{file=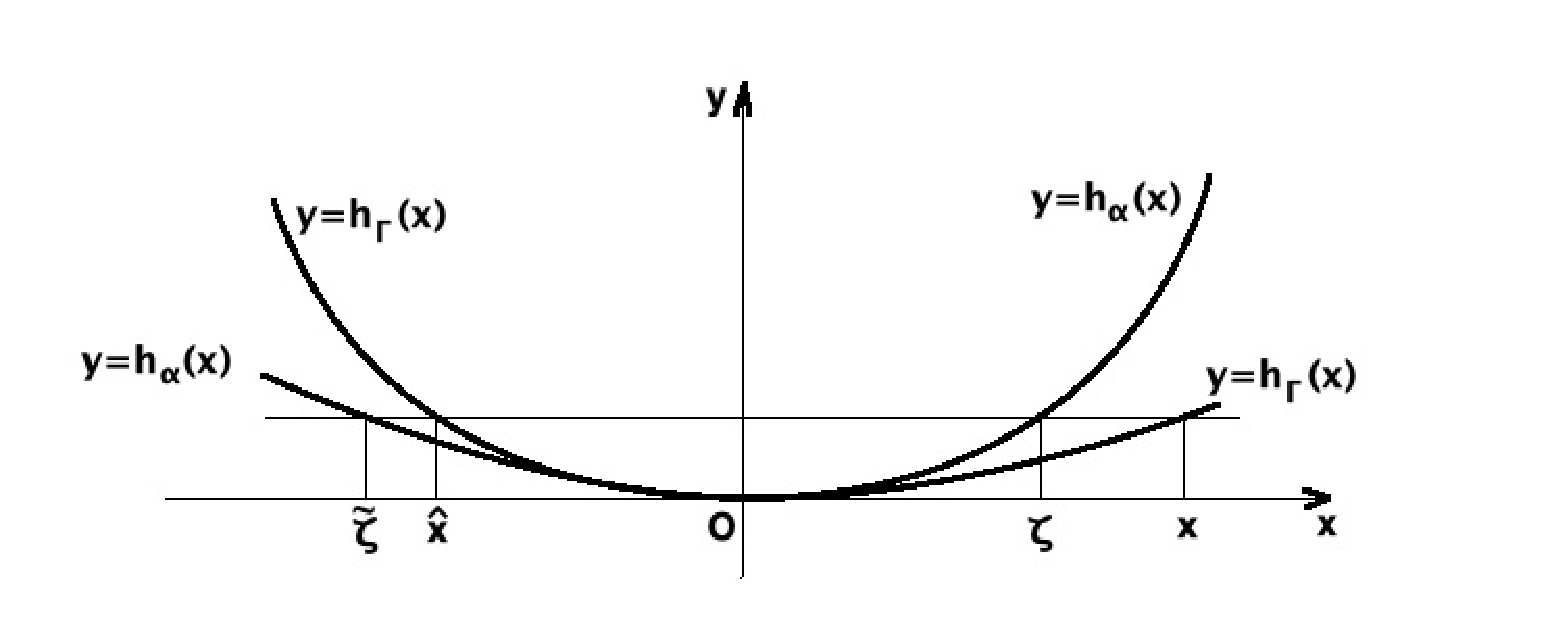, width=30em}
    \caption{The curve $\alpha$ and the conic $\Gamma$ as graphs of functions near $p=O$ that 
    differ in 5-th Taylor term.}
    \label{fig:1}
  \end{center}
\end{figure}
For every $x$ the slope of the curve $\beta=\alpha, \Gamma$ at the point $(x,h_\beta(x))$ 
is equal to $h'_\beta(x)$, and one has 
\begin{equation}h'_\beta(x)\simeq x, \ \ h'_\alpha(x)-h'_\Gamma(x)=5cx^4+o(x^4),
\label{h'b}\end{equation}
by (\ref{hc5}). For every $x$ small enough let 
\begin{equation} \zeta=\zeta(x) \ \text{ denote the point for which } \ h_\alpha(\zeta)=h_\Gamma(x) \text{ and } 
\zeta\simeq x,\label{defz}\end{equation}
\begin{equation}\hat x=\hat x(x) \ \text{ denote the  point distinct from } x \text{ where } \ h_\Gamma(\hat x)=h_\Gamma(x),\label{defxz}\end{equation}
\begin{equation}\tilde\zeta=\tilde\zeta(x) \ \text{ denote the  point distinct from } \zeta \text{ where } \ 
h_\alpha(\tilde\zeta)=h_\alpha(\zeta),\label{deftz}\end{equation}
see Figure 4. 

\medskip

{\bf Claim 1.} {\it One has}
\begin{equation}\zeta\simeq x, \  \hat x\simeq-x, \ \tilde\zeta\simeq-\zeta, \  h_\Gamma'(x)\simeq x, \ h_\alpha'(x)\simeq x,\label{hgha}\end{equation}
\begin{equation}
\zeta=x-cx^4+O(x^5), \ \  \tilde\zeta=\hat x-cx^4+O(x^5),\label{xzeta}\end{equation}
   \begin{equation}h_\alpha'(\zeta)=h_\Gamma'(x)+4cx^4(1+o(1)), \ \ h_{\alpha}'(\tilde\zeta)=
h_\Gamma'(\hat x)+4cx^4(1+o(1)).\label{h'}\end{equation}

\begin{proof} Formulas (\ref{hgha}) follow from definition, since 
$h_\beta(x)\simeq\frac12x^2$ for $\beta=\alpha,\Gamma$. One has  
$h_\Gamma(x)=h_\alpha(\zeta)=h_\Gamma(\zeta)+c\zeta^5+o(\zeta^5)$, $\zeta\simeq x$. Hence, 
$$h_\Gamma(\zeta)-h_\Gamma(x)=-c\zeta^5(1+o(1)).$$
The difference in the latter left-hand side is asymptotic to 
$h'_\Gamma(x)(\zeta-x)\simeq\zeta(\zeta-x)$. This implies the first asymptotic equality in 
(\ref{xzeta}). Applying the same argument to $\hat x$ and $\wt \zeta$ yields the second equality in 
(\ref{xzeta}). Let us prove the first equality in (\ref{h'}); the proof of  the second one is analogous. 
One has 
$$h_\alpha'(\zeta)-h_\Gamma'(x)=(h_\alpha'(\zeta)-h_\Gamma'(\zeta))+(h_\Gamma'(\zeta)-h_\Gamma'(x))$$
$$=5c\zeta^4+o(\zeta^4)+h_\Gamma''(x)(\zeta-x)+O((\zeta-x)^2).$$
Substituting the asymptotics $\zeta\simeq x$, $h_\Gamma''(x)\simeq 1$, $\zeta-x\simeq-cx^4$,  to the latter right-hand side  yields  (\ref{h'}) and finishes the proof of the claim.
\end{proof} 

Recall that the involutions $f$ and $g$ act on the slopes of tangent lines so that 
\begin{equation}g(h_\Gamma'(x))=h_\Gamma'(\hat x), \ f(h_\alpha'(\zeta))=h_\alpha'(\wt\zeta),
\label{gfh}\end{equation}
\begin{equation}h_\Gamma'(x)\simeq h_\alpha'(x)\simeq h_\alpha'(\zeta)\simeq x,\label{simx}
\end{equation}
\begin{equation}h_\Gamma'(\hat x)\simeq h_\alpha'(\hat x)\simeq h_\alpha'(\tilde\zeta)\simeq
\hat x\simeq -x.\label{sim-x}\end{equation}

\medskip

{\bf Claim 2.} {\it One has} 
\begin{equation} f(t)=g(t)+ 8ct^4(1+o(1)).\label{ftgt}\end{equation}

\begin{proof} Clearly one has $f(t)\simeq g(t)\simeq -t$, $f'(t)\simeq g'(t)\simeq -1$. This together 
with (\ref{gfh})-(\ref{sim-x}) and (\ref{h'}) implies that 
$$f(h_\Gamma'(x))=f(h_\alpha'(\zeta))-f'(h_\Gamma'(x))(h_\alpha'(\zeta)-h_\Gamma'(x))+
O((h_\alpha'(\zeta)-h_\Gamma'(x))^2)$$
$$=h_\alpha'(\wt\zeta)+4cx^4(1+o(1))=h_\Gamma'(\hat x)+(h_\alpha'(\wt\zeta)-h_\Gamma'(\hat x))
+4cx^4(1+o(1))$$
$$=g(h_\Gamma'(x))+8cx^4(1+o(1)).$$
This together with the asymptotics $h_\Gamma'(x)\simeq x$ proves (\ref{ftgt}).
\end{proof}

Claim 2 immediately implies the statements of Proposition \ref{jets=}.
\end{proof}

Thus, $f$ and $g$ are distinct projective involutions  that have 
the same 2-jet at a common fixed point. This contradicts Proposition \ref{jetneq}. The contradiction 
thus obtained finishes the proof of Lemma \ref{losc}.
\end{proof}

Lemma \ref{losc} together with Theorem \ref{tconic} imply that the curve $\alpha$ is a conic. 
This proves Theorem \ref{prodim2'}.

\section{Proof of Theorem \ref{prodim2'} in higher dimensions}
\subsection{Plan of proof of Theorem \ref{prodim2'}}
In Subsection 3.2 we prove that each planar section of the hypersurface $\alpha$ 
is a conic. This together with \cite[theorem 4]{art} will imply that  $\alpha$ is a quadric: see Subsection 3.3.

We take an arbitrary plane $\Pi$ intersecting $\alpha$ transversally. Let $\beta$ denote their intersection curve. 
Our Main Lemma \ref{conjmany2} states that every point $O\in\beta$ is sextactic. 
This together with Theorem \ref{tconic} implies that $\beta$ is a conic. 

Let $\mcl$ denote the class of lines parallel to $T_O\beta$. Consider the corresponding germ of involution $f=\mcr_\mcl=\mcr_{\alpha,\mcl}$ acting on a domain in the unit sphere 
$S$, see (\ref{jmcl}). By assumption, it fixes the normal vector $\vec n(O)$ to $\alpha$ at $O$, and its 
germ at $\vec n(O)$ is the lifting to $S$ of a projective involution. 
 Let $\gamma\subset\Pi$ denote the osculating conic of the curve $\beta$ at $O$. We have to show that they 
 have contact of order at least six. To do this, in Proposition \ref{piii} we construct an auxiliary 
 quadric $\Gamma$ containing $\gamma$, which we call {\it the osculating quadric to $\alpha$ along the 
 curve $\beta$,}  
 that is tangent to $\alpha$ at $O$ with contact of order at least three and for which  the normal vector fields 
 $\vec n_{\gamma}$, $\vec n_{\beta}$ 
 of the hypersurfaces $\Gamma$ and $\alpha$ along the curves $\gamma$ and $\beta$ respectively 
 differ by a quantity of at least cubic order. We introduce orthogonal coordinates 
 $(x_1,x_n)$ on $\Pi$ centered at $O$, with the $x_1$-axis being the common tangent line 
 to $\beta$ and $\gamma$ at $O$.  We deal with the above  normal  fields 
 as germs of parametrized curves $\vec n_{\gamma}(x_1)$, 
 $\vec n_\beta(x_1)$ in the unit sphere $S$. We identify $S$ with $\rp^{n-1}$  near 
 $\vec n(O):=\vec n_\gamma(O)=\vec n_\beta(O)$ via the tautological projection and denote a point 
 in $S$ and its projection to $\rp^{n-1}$ by the same symbol.  The involution $g=\mcr_{\Gamma,\mcl}$ is 
 a projective transformation, as is $f$, since $\Gamma$ is a quadric, see Corollary \ref{cquad}. 
 
 Step 1. Using the above asymptotics we will show 
 (Proposition \ref{pro=}) that the projective involutions coincide: $f=g$. To this end, for every $x_1$ 
 we consider the corresponding parameter values  
 $\zeta$, $\hat x_1$, $\tilde\zeta$ defined for the osculating planar curves $\gamma$ and $\beta$, 
 as in (\ref{defz})--(\ref{deftz}), see Figure 4. One  has 
 \begin{equation}f(\vec n_{\beta}(\zeta))=\vec n_{\beta}(\wt\zeta), \ \ g(\vec n_{\gamma}(x_1))=\vec n_{\gamma}(\hat x_1),\label{f=n=g}\end{equation}
   by the definition of the involutions $f$ and $g$. On one hand, using the above asymptotic properties of 
   the quadric $\Gamma$, we show that 
   $$\vec n_\gamma(x_1)-\vec n_\beta(\zeta)=O(x_1^3), \ \ 
   \vec n_\gamma(\hat x_1)-\vec n_\beta(\tilde\zeta)=O(x_1^3).$$
  This together with (\ref{f=n=g}) will imply that 
 $$g(\vec n_\gamma(x_1))-f(\vec n_\gamma(x_1))=O(x_1^3).$$
 On the other hand, we show in Claim 4 that if $f\neq g$, then the latter asymptotic equality cannot hold. 
 This will prove that $f=g$.

 Step 2. We prove Lemma \ref{conjmany2} by contradiction. 
 Suppose the contrary: the curves 
 $\beta$ and $\gamma$ have contact of order exactly five, i.e.,  they are graphs 
 $$\beta=\{ x_n=\nu_\alpha(x_1)\}, \ \gamma=\{ x_n=\nu_\Gamma(x_1)\}, \ \nu_\alpha(x_1)-
 \nu_\Gamma(x_1)\simeq cx_1^5, \ c\neq0.$$
 We calculate  asymptotics of difference of the  images in (\ref{f=n=g}) in two different ways and show that the  results contradict each other. 
 In more details, let $H$ denote the hyperplane orthogonal to  $T_O\beta$: it contains $\vec n(O)$. 
The projective involution $f$ fixes each point in $H$, and it has yet another fixed point denoted by 
$P\in \rp^{n-1}\setminus H$. We  construct a special affine chart $(z,y_1,\dots,y_{n-2})$ on  
 $\rp^{n-1}$ centered at  $\vec n(O)$ in which  $H=\{z=0\}$ and the projection $(z,y)\mapsto y$ is the 
 projection from the point $P$, thus $f$ fixes each $z$-fiber, so that 
$$z(\vec n_{\gamma}(x_1))\simeq z(\vec n_{\beta}(\zeta))\simeq x_1, \ \ \text{ as } \ x_1\to0.$$
We show that  

  a) the projective involution $f$  acts on each $z$-fiber  as $z\mapsto-z+O(z^2)$, where the $O(z^2)$ 
depends analytically on  $y$; 

b) the points $\vec n_\beta(\zeta)$ and $\vec n_\gamma(x_1)$  lie in $z$-fibers with $O(x_1^3)$-close  projections to the $y$-coordinates; 

c) their  $z$-coordinates  differ by a quantity $4cx_1^4(1+o(1))$; 

d) analogously 
one has  $z(\vec n_{\beta}(\wt\zeta))\simeq z(\vec n_{\gamma}(\hat x_1))\simeq -x_1$ and 
\begin{equation} z(\vec n_{\beta}(\wt\zeta))-z(\vec n_{\gamma}(\hat x_1))=4cx_1^4(1+o(1)).
\label{whwh}\end{equation} 
Statements a)--c) together imply that 
 the $f$-images of the points $\vec n_{\beta}(\zeta)$, $\vec n_{\gamma}(x_1)$  have $z$-coordinates that differ by $-4cx_1^4(1+o(1))$ plus a quantity $O(x_1^3)\times O(x_1^2)=O(x_1^5)$. 
Thus, 
\begin{equation}z(f(\vec n_\beta(\zeta)))-z(f(\vec n_\gamma(x_1)))=-4cx_1^4(1+o(1)).\label{zgv}
\end{equation}

But (\ref{zgv}) contradicts to (\ref{whwh}), due to equality (\ref{f=n=g}) and since $f=g$ (Step 1).   
The contradiction thus obtained will prove Lemma \ref{conjmany2}.

\subsection{Proof that planar sections are conics}
Here we prove the following theorem.
\begin{theorem} \label{conjmany} Consider a $C^6$-smooth germ $\alpha$ of hypersurface in $\rr^n$  with positive definite second fundamental quadratic form. Fix a two-dimensional plane $\Pi$ intersecting $\alpha$ transversally. Set $\beta=\Pi\cap\alpha$. Let $O\in\beta$,  $L\subset\Pi$ be the line tangent to 
$\beta$ at $O$, and let  $\mcl$ denote the class of lines parallel to $L$. 
Consider the corresponding 
germ of involution $f=\mcr_{\mcl}=\mcr_{\alpha,\mcl}$ acting on a domain in the unit sphere $S=S^{n-1}$, see (\ref{jmcl}): its germ at its fixed vector $\vec n(O)=\vec n_{\alpha}(O)=\vec n_{\Gamma}(O)$. Let for every line $L$ tangent to $\beta$ the corresponding mapping $f$ be 
the litfing to $S$ of a projective transformation. Then the intersection curve $\beta$ is a conic. 
\end{theorem}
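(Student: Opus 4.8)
The plan is to reduce Theorem \ref{conjmany} to the purely local assertion that \emph{every} point $O\in\beta$ is sextactic (the Main Lemma \ref{conjmany2}): granting this, Theorem \ref{tconic} applied to the connected planar curve $\beta=\Pi\cap\alpha$ yields at once that $\beta$ is a conic. So I fix $O\in\beta$ and work to prove sextacticity, using the hypothesis that $f=\mcr_{\alpha,\mcl}$ (with $\mcl$ the class of lines parallel to $T_O\beta$) is a lifted projective involution fixing $\vec n(O)$. First I would introduce the osculating conic $\gamma\subset\Pi$ of $\beta$ at $O$ and, via Proposition \ref{piii}, an osculating quadric $\Gamma\subset\rr^n$ containing $\gamma$, tangent to $\alpha$ at $O$ to order at least three, whose exterior normal field along $\gamma$ agrees with that of $\alpha$ along $\beta$ up to cubic order. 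Then $g=\mcr_{\Gamma,\mcl}$ is also a lifted projective involution, now by Corollary \ref{cquad}, sharing the fixed point $\vec n(O)$ with $f$. Parametrizing both curves over the common tangent direction $x_1$, I view the normals $\vec n_\beta(x_1),\vec n_\gamma(x_1)$ as curves in $S^{n-1}\cong\rp^{n-1}$ and set up the auxiliary parameters $\zeta,\hat x_1,\tilde\zeta$ exactly as in (\ref{defz})--(\ref{deftz}), so that $f(\vec n_\beta(\zeta))=\vec n_\beta(\tilde\zeta)$ and $g(\vec n_\gamma(x_1))=\vec n_\gamma(\hat x_1)$ as in (\ref{f=n=g}).

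The first step is to prove $f=g$ (Proposition \ref{pro=}). The cubic matching of the two normal fields gives $\vec n_\gamma(x_1)-\vec n_\beta(\zeta)=O(x_1^3)$ and, at the reflected parameters, $\vec n_\gamma(\hat x_1)-\vec n_\beta(\tilde\zeta)=O(x_1^3)$, whence $g(\vec n_\gamma(x_1))-f(\vec n_\gamma(x_1))=O(x_1^3)$. Since two lifted projective involutions with a common fixed point whose values agree to this order along a curve through that point must coincide --- the $\rp^{n-1}$ analogue (Claim 4) of the planar rigidity of Proposition \ref{jetneq} --- this forces $f=g$.

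The second step proves sextacticity by contradiction. Suppose the contact of $\beta$ and $\gamma$ is exactly five, so $\nu_\alpha(x_1)-\nu_\Gamma(x_1)\simeq cx_1^5$ with $c\neq0$. In the adapted affine chart $(z,y_1,\dots,y_{n-2})$ centered at $\vec n(O)$ --- in which the fixed hyperplane $H$ is $\{z=0\}$, the projection onto $y$ is from the second fixed point $P$ of $f$, and $f$ acts on each $z$-fiber as $z\mapsto-z+O(z^2)$ --- I would track only the scalar $z$-coordinate, which plays here exactly the role the slope played in the planar Claim 2. On one hand the $z$-coordinates of $f(\vec n_\beta(\zeta))$ and $f(\vec n_\gamma(x_1))$ differ by $-4cx_1^4(1+o(1))$, which is (\ref{zgv}); on the other hand $f=g$ from Step 1 turns these images into $\vec n_\beta(\tilde\zeta)$ and $\vec n_\gamma(\hat x_1)$, whose $z$-coordinates differ by $+4cx_1^4(1+o(1))$ by (\ref{whwh}). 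The sign clash forces $c=0$, so $O$ is sextactic, which establishes Lemma \ref{conjmany2} and hence the theorem.

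The main obstacle lies entirely inside the Main Lemma and is twofold. The principal difficulty is Proposition \ref{piii}: producing a single quadric $\Gamma$ that simultaneously osculates the planar curve $\beta$ through $\gamma$ and matches the hypersurface normal of $\alpha$ to cubic order all along $\beta$, for it is precisely this cubic matching that collapses the genuinely $n$-dimensional comparison of $f$ and $g$ onto the one-variable $z$-computation. The second is upgrading the planar involution rigidity of Proposition \ref{jetneq} to its $\rp^{n-1}$ form (Claim 4) and verifying that, in the adapted chart, $f$ indeed acts fiberwise as $z\mapsto-z+O(z^2)$ with the $O(z^2)$ term depending analytically on $y$; once these are secured, the remainder is the same $4cx_1^4$ asymptotic bookkeeping already carried out in two dimensions.
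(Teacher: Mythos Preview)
Your proposal is correct and follows essentially the same route as the paper: the reduction to Lemma \ref{conjmany2} via Theorem \ref{tconic}, the osculating quadric of Proposition \ref{piii}, the equality $f=g$ via the $O(x_1^3)$ estimate and the higher-dimensional rigidity of Claim 4, and the $\pm 4cx_1^4$ sign contradiction are exactly the ingredients and the order in which the paper assembles them. You have also correctly identified the two places where the genuine work sits, namely the construction of $\Gamma$ in Proposition \ref{piii} and the fiberwise description of $f$ in the adapted chart underlying Claims 4--6.
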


\begin{corollary} In the conditions of Theorem \ref{prodim2'} each planar section of the hypersurface 
$\alpha$ is a conic.
\end{corollary}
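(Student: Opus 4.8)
The plan is to deduce Theorem \ref{conjmany} from a local sextacticity statement combined with Theorem \ref{tconic}. Concretely, I would prove the Main Lemma \ref{conjmany2}: every point $O\in\beta$ is a sextactic point of the planar curve $\beta=\Pi\cap\alpha$, i.e. the osculating conic $\gamma\subset\Pi$ of $\beta$ at $O$ has contact of order at least six with $\beta$. Once this holds at every point, $\beta$ has everywhere contact of order at least six with its osculating conic, so Theorem \ref{tconic} forces $\beta$ to be a conic, which is precisely the assertion. Thus the whole matter is local around a fixed $O\in\beta$.

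The strategy mirrors the two-dimensional argument of Lemma \ref{losc}, the essential new difficulty being that the comparison now takes place in the $(n-1)$-sphere $S$ of normal directions rather than on a one-dimensional slope axis. The involution $f=\mcr_{\alpha,\mcl}$ is built from the \emph{ambient} normal field of the hypersurface $\alpha$ along $\beta$, not merely from the in-plane curvature of $\beta$, so to imitate the planar proof I first need a quadric model whose normals track those of $\alpha$ along $\beta$. This is the role of the osculating quadric $\Gamma$ (Proposition \ref{piii}): a quadric in $\rr^n$ that contains the planar osculating conic $\gamma$, is tangent to $\alpha$ at $O$ with contact of order at least three, and whose normal field along $\gamma$ differs from the normal field of $\alpha$ along $\beta$ by $O(\cdot^{3})$. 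Producing a single quadric that simultaneously matches $\alpha$ to the required order at $O$ and restricts to $\gamma$ on $\Pi$ is the geometric heart of the construction, and I expect it to be the main obstacle.

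With $\Gamma$ in hand I set $g=\mcr_{\Gamma,\mcl}$. By hypothesis $f$ is the lifting of a projective involution, and by Corollary \ref{cquad} so is $g$, since $\Gamma$ is a quadric. \textbf{Step 1:} I show $f=g$. Using the cubic closeness of the normal fields, for the matched parameter values $\zeta,\hat x_1,\tilde\zeta$ defined as in (\ref{defz})--(\ref{deftz}) one gets $\vec n_\gamma(x_1)-\vec n_\beta(\zeta)=O(x_1^{3})$ and $\vec n_\gamma(\hat x_1)-\vec n_\beta(\tilde\zeta)=O(x_1^{3})$, so by (\ref{f=n=g}) the two projective involutions agree to third order along the curve $\vec n_\gamma(x_1)$. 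Since a projective involution is rigid, its jet at a point determining it, a genuine difference $f\neq g$ is incompatible with third-order agreement; this rigidity (Claim 4) forces $f=g$.

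\textbf{Step 2:} I prove sextacticity by contradiction. Suppose $\beta$ and $\gamma$ have contact of order exactly five, i.e. in graph coordinates $\nu_\alpha(x_1)-\nu_\Gamma(x_1)\simeq cx_1^{5}$ with $c\neq0$. I introduce the special affine chart $(z,y_1,\dots,y_{n-2})$ on $\rp^{n-1}$ centered at $\vec n(O)$ in which the fixed hyperplane of $f$ is $\{z=0\}$, its second fixed point is the centre of the $z$-projection, and $f$ preserves each $z$-fibre acting as $z\mapsto-z+O(z^2)$. Tracking $z$-coordinates, the points $\vec n_\beta(\zeta)$ and $\vec n_\gamma(x_1)$ lie in $z$-fibres whose $y$-projections are $O(x_1^{3})$-close while their $z$-coordinates differ by $4cx_1^{4}(1+o(1))$; applying $f$ (which equals $g$ by Step 1) the image $z$-coordinates then differ by $-4cx_1^{4}(1+o(1))$ up to $O(x_1^{5})$, giving (\ref{zgv}). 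On the other hand the defining relations (\ref{f=n=g}) yield directly $z(\vec n_\beta(\tilde\zeta))-z(\vec n_\gamma(\hat x_1))=4cx_1^{4}(1+o(1))$, i.e. (\ref{whwh}). These two evaluations of the same quantity have opposite leading signs, a contradiction forcing $c=0$. Hence every $O\in\beta$ is sextactic, and Theorem \ref{tconic} completes the proof. The delicate point in Step 2 is pinning down the coefficient $4c$ and its sign in both the direct and the $f$-transported computation, the higher-dimensional echo of the factor $8c$ that appears in Claim 2 of the planar case.
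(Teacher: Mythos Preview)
Your proposal is correct and follows essentially the same route as the paper: you reduce the Corollary to Theorem \ref{conjmany}, reduce that in turn to the sextacticity Lemma \ref{conjmany2}, and then carry out the two-step argument (Step 1: $f=g$ via the osculating quadric of Proposition \ref{piii} and the rigidity Claim 4; Step 2: the $\pm4cx_1^4$ sign contradiction in the adapted $(z,y)$-chart) exactly as the paper does.
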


For the proof of Theorem \ref{conjmany} it suffices to prove the following lemma. 

\begin{lemma}\label{conjmany2} In the  conditions of Theorem \ref{conjmany} let for a given point $O\in\beta$ the corresponding map $f$ 
be the litfing to $S$ of a projective transformation.  Then $O$ is a sextactic point of the planar curve $\beta\subset\Pi$. 
 \end{lemma}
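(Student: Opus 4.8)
My plan is to transport the two-dimensional argument of Lemma \ref{losc} to the planar section $\beta=\Pi\cap\alpha$, the essential new point being that the reflection maps now act on $\rp^{n-1}$ rather than on $\rp^1$. Fix $O\in\beta$ and assume, towards a contradiction, that $O$ is \emph{not} sextactic for $\beta$, so that the osculating conic $\gamma\subset\Pi$ of $\beta$ at $O$ has contact of order exactly five with $\beta$. Taking orthogonal coordinates $(x_1,x_n)$ on $\Pi$ with the $x_1$-axis equal to the common tangent $L=T_O\beta=T_O\gamma$, I write both curves as graphs $\beta=\{x_n=\nu_\alpha(x_1)\}$, $\gamma=\{x_n=\nu_\Gamma(x_1)\}$ with $\nu_\alpha(x_1)-\nu_\Gamma(x_1)\simeq cx_1^5$, $c\neq0$, exactly as in (\ref{hc5}). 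The first genuinely higher-dimensional ingredient is to promote the planar conic $\gamma$ to an ambient \emph{osculating quadric} $\Gamma\subset\rr^n$ containing $\gamma$, tangent to $\alpha$ at $O$ to order at least three, and whose Gauss map agrees with that of $\alpha$ to cubic order along the two curves; this is the content of Proposition \ref{piii}. Setting $\mcl$ equal to the class of lines parallel to $L$, I then compare
$$f=\mcr_{\alpha,\mcl},\qquad g=\mcr_{\Gamma,\mcl},$$
germs of involutions of a neighbourhood of $\vec n(O)$ in $S=S^{n-1}$. By hypothesis $f$ is the lift of a projective involution of $\rp^{n-1}$, and by Corollary \ref{cquad} so is $g$, because $\Gamma$ is a quadric. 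Since the fixed locus of $\mcr_\mcl$ depends only on $\mcl$, both $f$ and $g$ fix pointwise the same hyperplane $H=\{[\vec v]:\vec v\perp L\}\cong\rp^{n-2}$; hence each is a harmonic homology, a projective involution determined by its axis $H$ together with its one remaining fixed point (the pole).

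The argument then splits into two steps, mirroring Proposition \ref{jets=}. In Step 1 (Proposition \ref{pro=}) I prove $f=g$. Regarding the normals as curves $\vec n_\beta(x_1),\vec n_\gamma(x_1)$ in $S$ and introducing the auxiliary parameters $\zeta,\hat x_1,\wt\zeta$ exactly as in (\ref{defz})--(\ref{deftz}), the cubic agreement of the Gauss maps yields $\vec n_\gamma(x_1)-\vec n_\beta(\zeta)=O(x_1^3)$ and $\vec n_\gamma(\hat x_1)-\vec n_\beta(\wt\zeta)=O(x_1^3)$. Combined with the defining relations (\ref{f=n=g}), this forces $g(\vec n_\gamma(x_1))-f(\vec n_\gamma(x_1))=O(x_1^3)$, i.e. the two harmonic homologies agree to third order along a curve issuing transversally from their common fixed point $\vec n(O)\in H$. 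A short projective computation (Claim 4, the $\rp^{n-1}$-analogue of Proposition \ref{jetneq}) shows that two harmonic homologies with the same axis that agree to cubic order along such a curve must have the same pole, whence $f=g$.

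In Step 2 I derive the contradiction. I work in a special affine chart $(z,y_1,\dots,y_{n-2})$ adapted to $f$, with $H=\{z=0\}$ and with the $z$-fibres being the lines through the pole that $f$ preserves, so that $f$ acts on each fibre as $z\mapsto-z+O(z^2)$ and $z(\vec n_\gamma(x_1))\simeq z(\vec n_\beta(\zeta))\simeq x_1$. Repeating on the coordinate $z$ the one-dimensional estimates of Claim 1 and Claim 2, the exact order-five discrepancy $c\neq0$ produces $z(\vec n_\beta(\zeta))-z(\vec n_\gamma(x_1))=4cx_1^4(1+o(1))$ with $y$-projections agreeing to $O(x_1^3)$, and likewise $z(\vec n_\beta(\wt\zeta))-z(\vec n_\gamma(\hat x_1))=4cx_1^4(1+o(1))$, which is (\ref{whwh}). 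Applying $f$ reverses the sign of the leading term while the second-order correction contributes only $O(x_1^5)$, giving $z(f(\vec n_\beta(\zeta)))-z(f(\vec n_\gamma(x_1)))=-4cx_1^4(1+o(1))$, which is (\ref{zgv}). But by (\ref{f=n=g}) and $f=g$ the left-hand sides of (\ref{zgv}) and (\ref{whwh}) coincide, forcing $4c=-4c$, hence $c=0$, a contradiction. Thus $\beta$ and $\gamma$ have contact of order at least six and $O$ is sextactic.

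I expect the main obstacle to be Step 1, namely upgrading cubic-order agreement to the exact equality $f=g$: unlike the planar case, where Proposition \ref{jetneq} converts $2$-jet agreement of two $\rp^1$-involutions directly into coincidence, here one must show that a harmonic homology of $\rp^{n-1}$ is rigidly pinned down by its third-order behaviour along a single curve transverse to its axis. The second technical burden is Proposition \ref{piii}: constructing the ambient quadric $\Gamma$ through $\gamma$ with the precise third-order tangency to $\alpha$ and the cubic agreement of Gauss maps, which requires a careful jet computation of the two Gauss maps along the section and feeds both the $O(x_1^3)$ estimates of Step 1 and the $4cx_1^4$ estimates of Step 2.
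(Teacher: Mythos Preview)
Your proposal is correct and follows essentially the same route as the paper's proof: you invoke Proposition~\ref{piii} to build the osculating quadric $\Gamma$, establish $f=g$ via the rigidity argument for projective involutions sharing the axis $H$ (the paper's Claim~4 / Proposition~\ref{pro=}), and then derive the contradiction in the adapted $(z,y)$-chart by comparing the $4cx_1^4$ asymptotics on the two sides of (\ref{f=n=g}). Your identification of the two technical burdens---the rigidity step and the jet computation underlying Proposition~\ref{piii}---matches exactly where the paper does the work.
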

 
Lemma \ref{conjmany2}  together with Theorem \ref{tconic}  imply Theorem \ref{conjmany}. 
 
  \begin{proposition} \label{piii} Given a point $O\in\rr^n$, a germ $\alpha$ of $C^6$-smooth 
 hypersurface in $\rr^n$ at  $O$ 
 and a two-dimensional plane $\Pi$ through $O$ transversal to $\alpha$ 
 at $O$. Let $L$ denote the intersection of the plane $\Pi$ with the hyperplane tangent to $\alpha$ 
 at $O$. Set $\beta:=\Pi\cap\alpha$. Let the planar curve $\beta$ have positive curvature at $O$ (which is always the case if 
 $\alpha$ has positive second fundamental form at $O$).  Let $\gamma\subset\Pi$ denote the osculating conic at $O$ for the curve $\beta$. There exists a unique quadric $\Gamma\subset\rr^n$ containing the osculating conic $\gamma$ that satisfies the following statements:
 
 (i) The quadric $\Gamma$ is tangent to $\alpha$ at $O$ with contact of at least cubic order, 
 i.e., has the same tangent hyperplane and the same second fundamental quadratic form at $O$, as $\alpha$;
 
 (ii) Consider the osculating curves $\beta$ and $\gamma$ parametrized by the same parameter $t$, 
 $t(O)=0$,  so that $\beta(t)-\gamma(t)=O(t^4)$. The smooth unit normal vector fields on $\alpha$ and 
 $\Gamma$ with the same orientation at $O$, being restricted to the curves $\beta$ and $\gamma$, have the same 2-jet at $0$  as vector functions of $t$. 
 
 The quadric $\Gamma$ is called the {\bf osculating quadric along the curve $\beta$}  for the hypersurface $\alpha$.  It is unique in the class of complex quadrics.
 \end{proposition}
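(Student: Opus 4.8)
The plan is to reduce the whole statement to an explicit, \emph{triangular} system of linear equations on the coefficients of a quadric graph, whose nondegeneracy is guaranteed by the positivity of the second fundamental form. First I would fix adapted coordinates $(x_1,\dots,x_n)$ centered at $O$ so that $T_O\alpha=\{x_n=0\}$, the plane $\Pi$ is the $(x_1,x_n)$-plane, and $L$ is the $x_1$-axis. Then $\alpha$ is a graph $x_n=h(x')$, $x'=(x_1,\dots,x_{n-1})$, with $h(x')=q_0(x')+O(|x'|^3)$, where $q_0=\tfrac12\sum b_{ik}x_ix_k$ is the (positive definite) second fundamental form; in particular $b_{11}=\partial_1^2 h(0)>0$. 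The section $\beta$ is the graph $x_n=\nu(x_1):=h(x_1,0,\dots,0)$, and its osculating conic $\gamma\subset\Pi$ matches the $4$-jet of $\nu$.

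Next I would parametrize the candidate quadrics. Writing a quadric through $O$ tangent to $\{x_n=0\}$ as $\{-x_n+q(x',x_n)=0\}$ with $q$ homogeneous quadratic, and splitting $q=q_0(x')+x_n q_1(x')+q_2 x_n^2$, one solves for $x_n$ to obtain the graph $x_n=H(x')=q_0+q_1q_0+(q_1^2q_0+q_2q_0^2)+\cdots$. Condition (i) says precisely that the $2$-jet of $H$ equals that of $h$, i.e. it fixes $q_0$; the remaining freedom is the linear form $q_1=\sum_j q_1^{(j)}x_j$ and the constant $q_2$, that is $n$ parameters. Restricting $H$ to $\Pi$ produces the conic $\Gamma\cap\Pi$, whose graph is $\tfrac12 b_{11}t^2+\tfrac12 b_{11}q_1^{(1)}t^3+(\tfrac12 b_{11}(q_1^{(1)})^2+\tfrac14 q_2 b_{11}^2)t^4+\cdots$. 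Requiring $\Gamma\supset\gamma$, equivalently $\Gamma\cap\Pi=\gamma$ (using that $\gamma$ is a nondegenerate, hence irreducible, conic since $\beta$ has positive curvature, so a conic germ is determined by its $4$-jet), then amounts to matching the $3$- and $4$-jets of $\nu$; this determines $q_1^{(1)}$ and then $q_2$ uniquely because $b_{11}\neq0$.

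It remains to read off condition (ii) as equations for the transverse coefficients $q_1^{(2)},\dots,q_1^{(n-1)}$. Parametrizing $\beta,\gamma$ by $t=x_1$ (so that $\beta(t)-\gamma(t)=(0,\dots,0,\nu(t)-\phi(t))=O(t^5)$, where $\phi$ is the graph of $\gamma$), the unnormalized normals along the curves are $N=(-\partial_1 H,\dots,-\partial_{n-1}H,1)$ evaluated on $\Pi$, agreeing with $e_n$ at $O$. For $j\ge2$ a direct computation gives $\partial_j h|_\Pi=b_{j1}t+\tfrac12 h_{j11}t^2+O(t^3)$ and $\partial_j H|_\Pi=b_{j1}t+(\tfrac12 b_{11}q_1^{(j)}+b_{j1}q_1^{(1)})t^2+O(t^3)$, where $h_{j11}=\partial_j\partial_1^2 h(0)$, while the $j=1$ components differ only at order $t^4$ (from $\nu-\phi=O(t^5)$) and hence so does $|N|$. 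Consequently the $2$-jet of the \emph{unit} normal $N/|N|$ matches between $\alpha$ and $\Gamma$ if and only if, for every $j\ge2$, the $t^2$-coefficients of $\partial_j H|_\Pi$ and $\partial_j h|_\Pi$ coincide, i.e. $\tfrac12 b_{11}q_1^{(j)}+b_{j1}q_1^{(1)}=\tfrac12 h_{j11}$; since $q_1^{(1)}$ is already fixed and $b_{11}>0$, each $q_1^{(j)}$ is determined uniquely.

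Thus the conditions form a triangular linear system: (i) fixes $q_0$, containing $\gamma$ fixes $(q_1^{(1)},q_2)$, and (ii) fixes the remaining $q_1^{(j)}$, each stage being uniquely solvable exactly because the second fundamental form is positive definite (so that the pivots $b_{11}$ never vanish). This yields both existence and uniqueness at once; as the identical system has the same unique solution over $\cc$, the quadric is unique even among complex quadrics. The step I expect to be the main obstacle is the bookkeeping in the third paragraph: one must verify carefully that passing from $N$ to $N/|N|$, combined with the fact that the tangential ($j=1$) discrepancy is of strictly higher order than the transverse ($j\ge2$) ones, makes condition (ii) equivalent to matching merely the $t^2$-coefficients of the transverse components — so that the number of equations coincides with the number of free parameters and the system stays triangular.
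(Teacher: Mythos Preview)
Your proof is correct and follows essentially the same approach as the paper's: in adapted graph coordinates one shows that the conditions (i), $\gamma\subset\Gamma$, and (ii) amount to a triangular linear system on the quadric's coefficients, with condition (ii) reducing (exactly as you anticipate, and as in the paper's Claim 3) to matching the $t^2$-coefficients of the transverse partial derivatives $\partial_j H|_\Pi$, the normalization $N\mapsto N/|N|$ contributing only at order $\ge3$. The only tactical difference is that the paper first applies a projective transformation of $\mathbb{RP}^n$ normalizing $\gamma$ to the parabola $\{x_n=x_1^2\}$, which absorbs your parameters $q_1^{(1)},q_2$ from the outset (so no $x_1x_n$ or $x_n^2$ term appears in the quadric's defining equation), leaving only the analogues of your $q_0$ (fixed by (i)) and $q_1^{(j)}$, $j\ge2$ (fixed by (ii)).
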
 
 
 {\bf Addendum to Proposition \ref{piii}.} {\it The above quadric $\Gamma$ exists and is unique under a weaker assumption, with  $\gamma$ being just some conic  tangent to $\beta$ with 
 contact of order at least four (not necessarily at least five).}
 \medskip

 \begin{proof} We prove the addendum (a stronger statement). Thus, everywhere below we will assume 
that $\gamma$ is a conic tangent to $\beta$ with contact of order at least 4. 
 Condition (ii) is parametrization-independent: replacing $\gamma(t)$ by $\gamma(\tau(t))=
 \gamma(t)+O(t^4)$  changes the unit normal vector to $\Gamma$ at $\gamma(t)$ by a quantity of order at least three. 
  The system of conditions (i) and (ii) is invariant under projective transformations. Indeed, invariance of condition (i) is obvious. Condition (ii) means that at points $\beta(t)$ and $\gamma(\tau)$ 
that are close to each other of order $O(t^4)$ the corresponding tangent hyperplanes to $\alpha$ and $\Gamma$ respectively  are $O(t^3)$-close to each other as elements of Grassmannian manifold (which is in our case the dual projective space). The latter condition is obviously projective-invariant. 
Applying projective transformation of $\rp^n\supset\rr^n$ we can and will consider that $O$ is the 
origin in the affine chart $\rr^n$, the tangent hyperplane to 
 $\alpha$ at $O$ is the coordinate hyperplane $\{ x_n=0\}$, the plane $\Pi$ is the $(x_1,x_n)$-coordinate plane, 
 and the conic $\gamma$ is the parabola $\{ x_n=x_1^2\}$.  Set
 $$\wh x:=(x_2,\dots,x_{n-1}).$$
 Then each quadric $\Gamma$ tangent to the 
 hyperplane $\{ x_n=0\}$ at $O$ and containing $\gamma$ is given by a quadratic equation of the type 
 \begin{equation} x_1^2+x_1\sum_{j=2}^{n-1}c_jx_j+<A\wh x,\wh x>-x_n(1+\sum_{j=2}^{n-1}d_jx_j)=0. \label{quadeq}\end{equation}
 Or equivalently, 
 \begin{equation}\Gamma=\{ h_\Gamma(x_1,\wh x)-x_n=0\}, \ \ h_{\Gamma}(x_1,\wh x)=\frac{x_1^2+x_1\sum_{j=2}^{n-1}c_jx_j+<A\wh x,\wh x>}{1+\sum_{j=2}^{n-1}d_jx_j}\label{hgxx}\end{equation}
$$=(x_1^2+x_1\sum_{j=2}^{n-1}c_jx_j+<A\wh x,\wh x>)(1-\sum_{j=2}^{n-1}d_jx_j)+(\text{terms of order}\geq4).$$
 Here $A$ is a symmetric $(n-2)\times(n-2)$-matrix. Each germ of $C^6$-smooth 
 hypersurface $\alpha$ tangent to $\Gamma$ 
 at $O$ and having the same second quadratic form at $O$, as $\Gamma$, is locally a graph 
\begin{equation}\alpha=\{ h_\alpha(x_1,\wh x)-x_n=0\},\label{graphsag}\end{equation}
 where $h_{\alpha}$ is a $C^6$-smooth function of the type 
\begin{equation} h_{\alpha}(x_1,\wh x)=x_1^2+x_1\sum_{j=2}^{n-1}c_jx_j+<A\wh x,\wh x>+ (\text{ terms of order } \geq3).\label{hoterms}\end{equation} 
The intersection $\beta=\alpha\cap\Pi$ is a curve tangent to $\gamma$ with contact of order 
at least 4, if and only if the restriction to the $x_1$-axis of the  higher order terms in (\ref{hoterms}) has trivial 3-jet. This means that $h_{\alpha}$ does not contain Taylor term $x_1^3$. Let $s_j$ denote the Taylor 
coefficients of the function $h_{\alpha}$ at $x_1^2x_j$, $j=2,\dots,n-1$. Finally, 
\begin{equation}h_{\alpha}(x_1,\wh x)-h_\Gamma(x_1,\wh x)=x_1^2\sum_{j=2}^{n-1}(s_j+d_j)x_j+
x_1O(||\wh x||^2)+O(||\wh x||^3)\label{diffh}\end{equation}
$$+(\text{terms of order}\geq4 \text{ in } (x_1,\wh x)).$$

\medskip

{\bf Claim 3.} 
{\it Condition (ii) holds if and only if $d_j=-s_j$.}

\begin{proof}  The normal fields to the hypersurfaces $\xi=\alpha,\Gamma$ directed "down" are equal to 
\begin{equation} \vec n_{\xi}=\frac{(\frac{\partial h_\xi}{\partial x_1},\dots,\frac{\partial h_\xi}{\partial x_{n-1}}, -1)}
{\sqrt{1+\sum_{j=1}^{n-1}(\frac{\partial h_\xi}{\partial x_1})^2}}.\label{vecxi}\end{equation}
We have to compare them along the intersections of the hypersurfaces with the $(x_1,x_n)$-plane, which means comparing the restrictions of the vector functions (\ref{vecxi}) with $\xi=\alpha,\gamma$  to the 
$x_1$-axis. 
 The only terms in (\ref{diffh}) that can contribute to linear or quadratic terms in $x_1$ in the partial derivatives 
 $\frac{\partial h_\alpha}{\partial x_j}$ are 
\begin{equation} x_1^2\sum_{j=2}^{n-1}(s_j+d_j)x_j,\label{exterms}\end{equation}
 since 
the contributions of the terms $x_1O(||\wh x||^2)+O(||\wh x||^3)$ to the above partial derivatives vanish 
identically along the $x_1$-axis, as does the coordinate vector function $\wh x$, and 
 terms of order at least 4 contribute terms of order at least three. The contributions of  terms (\ref{exterms}) 
 to the above partial derivatives are quadratic, and the derivatives in question are small of order at least one. 
 Therefore, the contributions of  terms (\ref{exterms})  to the denominators in (\ref{vecxi}) for $\xi=\alpha$ are of order at least three.  Finally, the statement saying that the vector functions 
 $\vec n_\alpha$, $\vec n_\Gamma$ have the same 2-jet along the $x_1$-axis is equivalent to the same 
 statement for the numerators (the gradient fields) in (\ref{vecxi}). But the restriction to the $x_1$-axis of the 
 difference of the numerators in (\ref{vecxi}) 
 correspoding to $\xi=\alpha,\Gamma$ is equal (up to terms of order at least three) to the contribution of (\ref{exterms}): that is,  to $(0,(s_2+d_2)x_1^2,\dots,(s_{n-1}+d_{n-1})x_1^2,0)$. The latter contribution 
 has trivial second jet, if and only if it vanishes, i.e., $d_j=-s_j$ for all $j=2,\dots,n-1$. The claim is proved.
 \end{proof}

Thus, all the terms in (\ref{quadeq}) are uniquely determined by conditions (i) and (ii). This proves Proposition 
\ref{piii} and its addendum.
\end{proof}
  Consider now new, affine orthogonal coordinates $(x_1,\dots,x_n)$  centered at $O$ such that the $x_1$-axis is the common tangent line $L$ to the curves $\gamma$ and $\beta$, and the $x_n$-axis lies in the plane $\Pi$. 
   In what follows we consider that the curves $\gamma,\beta\subset\Pi$ have unit curvature at $O$: 
  one can achieve this by rescaling the coordinates $(x_1,\dots,x_n)$ by homothety. 
  The hypersurfaces $\xi=\Gamma, \alpha$ are graphs $\{ x_n=h_\xi(x_1,\dots,x_{n-1}\}$ of 
  functions $h_{\Gamma,\alpha}$. Their intersections with $\Pi$, that is, the 
  curves $\gamma$ and $\beta$, are graphs $\{ x_n= \nu_{\xi}(x_1):=h_\xi(x_1,0,\dots,0)\}$. 
  For every $x_1$ let $\hat x_1$, $\zeta$, $\hat\zeta$ be the same, as in (\ref{defz})--(\ref{deftz}): 
  $$\zeta=\zeta(x_1) \ \text{ is the point for which } \ \nu_\alpha(\zeta)=\nu_\Gamma(x_1), \text{ and } 
\zeta\simeq x_1,$$
$$\hat x_1=\hat x_1(x_1) \ \text{ is the  point distinct from } x_1 \text{ where } \ \nu_\Gamma(\hat x_1)=\nu_\Gamma(x_1),$$
$$\tilde\zeta=\tilde\zeta(x_1) \ \text{ is the  point distinct from } \zeta \text{ where } \ 
\nu_\alpha(\tilde\zeta)=\nu_\alpha(\zeta),$$
see Figure 4 (with $h$ replaced by $\nu$). We consider that the curves 
  $\gamma$ and $\beta$ are parametrized by the parameter $x_1$ as graphs of functions 
  $\nu_{\Gamma}(x_1)$ and $\nu_{\alpha}(x_1)$ respectively. Recall that we assume that 
  the curves $\gamma$ and $\beta$ have tangency of contact of order exactly 5. Therefore,
   \begin{equation}\nu_\alpha(x_1)-\nu_\Gamma(x_1)=cx_1^5(1+o(1)), \ \text{ as } x_1\to0; \ \ c\neq0.\label{ct5}\end{equation}
   Recall that we denote $f=\mcr_{\alpha,\mcl}$ and $g=\mcr_{\Gamma,\mcl}$. Here $\mcl$ is the class of lines 
   parallel to the line $L$ tangent to the curves $\beta$, $\gamma$ at $O$.

  \begin{proposition}  \label{pasmul} 
  1) Consider the restrictions $\vec n_\gamma(x_1)$, $\vec n_\beta(x_1)$ 
  to the curves $\gamma$ and $\beta$ 
  of the exterior unit normal vector fields to $\Gamma$ and $\alpha$ respectively as functions of $x_1$.  One has 
  \begin{equation} g(\vec n_\gamma(x_1))=\vec n_\gamma(\hat x_1), \ f(\vec n_\beta(\zeta))=
  \vec n_\beta(\tilde\zeta),\label{gnfg}\end{equation}
  \begin{equation}\vec n_\gamma(x_1)-\vec n_\beta(\zeta)=O(x_1^3), \
   \vec n_\gamma(\hat x_1)-\vec n_\beta(\tilde\zeta)=O(x_1^3).\label{ot3}\end{equation}
   
  2) Consider the orthogonal projections to $\Pi$ of the vector fields $\vec n_{\gamma}$, 
  $\vec n_{\beta}$. Let $\psi_\gamma(x_1)$ and $\psi_\beta(x_1)$ denote the arguments 
  of the projections (their azimuths with respect to the $x_1$-axis). One has 
  \begin{equation} \psi_\beta(\zeta)-\psi_\gamma(x_1)=4cx_1^4(1+o(1)),\label{psizg}\end{equation}
  \begin{equation} \psi_\beta(\tilde\zeta)-\psi_\gamma(\hat x_1)=4cx_1^4(1+o(1)),\label{psizg'}
  \end{equation}
    \end{proposition}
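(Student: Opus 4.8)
The plan is to derive all four statements from the defining property (\ref{jmcl}) of the involutions $f=\mcr_{\alpha,\mcl}$, $g=\mcr_{\Gamma,\mcl}$, together with the contact properties of the osculating quadric established in Proposition \ref{piii}, reducing everything to the one-variable asymptotic analysis already performed in Section 2. First I would record (\ref{gnfg}), which is essentially a matter of unwinding definitions. Since $\Pi\cap\alpha=\beta$ and $\Pi\cap\Gamma=\gamma$ (the intersection of the quadric $\Gamma$ with $\Pi$ is a conic containing $\gamma$, hence equals $\gamma$), any line of the class $\mcl$ lying in $\Pi$ meets $\alpha$, respectively $\Gamma$, exactly where it meets $\beta$, respectively $\gamma$. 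The line parallel to $L$ through $\gamma(x_1)$, at height $\nu_\Gamma(x_1)=\nu_\Gamma(\hat x_1)$, therefore meets $\Gamma$ precisely at $\gamma(x_1)$ and $\gamma(\hat x_1)$, so by (\ref{jmcl}) $g(\vec n_\gamma(x_1))=\vec n_\gamma(\hat x_1)$; likewise the line through $\beta(\zeta)$ at height $\nu_\alpha(\zeta)=\nu_\alpha(\tilde\zeta)$ meets $\alpha$ at $\beta(\zeta)$ and $\beta(\tilde\zeta)$, giving $f(\vec n_\beta(\zeta))=\vec n_\beta(\tilde\zeta)$.

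Next I would prove (\ref{ot3}). The functions $\nu_\alpha,\nu_\Gamma$ satisfy exactly the hypotheses imposed on $h_\alpha,h_\Gamma$ in Section 2 (unit curvature at $O$ and $\nu_\alpha-\nu_\Gamma=cx_1^5(1+o(1))$, cf. (\ref{ct5})), so the one-variable computation of Claim 1 in Section 2 applies verbatim and yields, as in (\ref{xzeta}), the expansions $\zeta=x_1-cx_1^4+O(x_1^5)$ and $\tilde\zeta=\hat x_1-cx_1^4+O(x_1^5)$; in particular $\zeta-x_1=O(x_1^4)$ and $\tilde\zeta-\hat x_1=O(x_1^4)$. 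On the other hand, Proposition \ref{piii}(ii), applied with the common parameter $t=x_1$ (for which $\beta(x_1)-\gamma(x_1)=(0,\dots,0,cx_1^5(1+o(1)))=O(x_1^4)$), asserts that $\vec n_\beta$ and $\vec n_\gamma$ have the same $2$-jet in $x_1$, i.e. $\vec n_\beta(x_1)-\vec n_\gamma(x_1)=O(x_1^3)$. Since $\vec n_\beta$ is $C^1$, hence locally Lipschitz, the parameter shift costs only $\vec n_\beta(x_1)-\vec n_\beta(\zeta)=O(|\zeta-x_1|)=O(x_1^4)$, and combining the two estimates gives $\vec n_\gamma(x_1)-\vec n_\beta(\zeta)=O(x_1^3)$. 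The second equality of (\ref{ot3}) follows identically, using Proposition \ref{piii}(ii) at the parameter $\hat x_1\simeq-x_1$ together with $\tilde\zeta-\hat x_1=O(x_1^4)$.

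For part 2 I would exploit that the orthogonal projection to $\Pi=\{x_2=\dots=x_{n-1}=0\}$ of the unit normal (\ref{vecxi}) restricted to the relevant curve keeps only its $x_1$- and $x_n$-components, namely $(\nu_\alpha'(x_1),-1)$ along $\beta$ and $(\nu_\Gamma'(x_1),-1)$ along $\gamma$, the transverse components being annihilated. Hence the azimuth of the projected normal is one and the same smooth function of the slope alone, $\psi=\arctan(\nu')+\text{const}$ (with the orientation chosen so that the leading coefficient below is $+4c$), whose derivative $d\psi/d\nu'=(1+(\nu')^2)^{-1}$ equals $1$ at $\nu'=0$. The slope difference is computed exactly as in (\ref{h'}): $\nu_\alpha'(\zeta)-\nu_\Gamma'(x_1)=4cx_1^4(1+o(1))$ and $\nu_\alpha'(\tilde\zeta)-\nu_\Gamma'(\hat x_1)=4cx_1^4(1+o(1))$. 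Since both slopes are $O(x_1)$, the mean value theorem gives $\psi_\beta(\zeta)-\psi_\gamma(x_1)=(1+O(x_1^2))(\nu_\alpha'(\zeta)-\nu_\Gamma'(x_1))=4cx_1^4(1+o(1))$, which is (\ref{psizg}); formula (\ref{psizg'}) follows in the same way from the second identity.

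The only genuinely delicate point is that part 2 is strictly finer than (\ref{ot3}): the full unit normals differ by $O(x_1^3)$, but this discrepancy lives in the directions transverse to $\Pi$, whereas the in-plane (azimuthal) part must be pinned down to its exact leading coefficient $4c$, not merely to its order. This is precisely what forces the use of the sharp one-variable expansions of $\zeta,\tilde\zeta$ and of the slope difference borrowed from Section 2, rather than just Proposition \ref{piii}(ii); and it is exactly this $4cx_1^4$ in-plane term that will drive the eventual contradiction in the proof of Lemma \ref{conjmany2}.
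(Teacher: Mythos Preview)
Your proof is correct and follows essentially the same approach as the paper: (\ref{gnfg}) is by definition, (\ref{ot3}) comes from Proposition \ref{piii}(ii) combined with the $O(x_1^4)$ shift in $\zeta$ from Section 2, and (\ref{psizg})--(\ref{psizg'}) are obtained by observing that the projection to $\Pi$ of the ambient normal is the planar normal to $\beta$ (respectively $\gamma$), so the azimuths are determined by the slopes $\nu'_{\alpha},\nu'_{\Gamma}$, whose difference is given by (\ref{h'}). The only stylistic difference is that the paper phrases the planar projection step as ``the projection to $\Pi$ of the normal to $\alpha$ along $\beta$ is orthogonal to $\beta$'', whereas you compute the projected components $(\nu'(x_1),-1)$ directly from (\ref{vecxi}); these are equivalent.
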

  
  \begin{proof} Formulas (\ref{gnfg}) follow from definition. One has 
  \begin{equation}\vec n_\beta(x_1)-\vec n_\gamma(x_1)=O(x_1^3).\label{vec3}\end{equation}
  This follows from the fact that the  restrictions to $\gamma$, $\beta$ of the normal vector 
  fields 
  of the hypersurfaces $\Gamma$ and $\alpha$ have the same 2-jet at $O$, which in its turn 
  follows from Statement (ii) of Proposition \ref{piii}. One has $\zeta-x_1=O(x_1^4)$, by 
  (\ref{xzeta}). This together with (\ref{vec3}) implies the first formula in (\ref{ot3}), and the 
  proof of the second formula is analogous. Each argument $\psi_{\beta,\gamma}$ in (\ref{psizg}), (\ref{psizg'}) 
  is equal to $-\frac\pi2$ plus the azimuth of the  tangent line (oriented to the right) to the curve in question  at the corresponding point. This follows from the fact that the projection to $\Pi$ of the restriction to 
  $\beta$ ($\gamma$) of the normal vector field to 
  $\alpha$ ($\Gamma$) is orthogonal to $\beta$ ($\gamma$). 
  The tangents of the latter azimuths are the derivatives in  (\ref{h'}). 
Formulas (\ref{psizg}) and (\ref{psizg'}) follow from formulas (\ref{h'}), the two last statements and the fact that $\tan x$ has unit derivative at the origin. Proposition \ref{pasmul} is proved.
  \end{proof}
  
  \begin{proposition} \label{pro=} 
   The  involutions $f=\mcr_{\alpha,\mcl}$ and $g=\mcr_{\Gamma,\mcl}$ coincide.
  \end{proposition}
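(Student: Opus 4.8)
The plan is to exploit a structural feature that $f$ and $g$ share far more than a common fixed point: by the very definition of $\mcr_\mcl$ recorded after (\ref{jmcl}), every involution $\mcr_\mcl$ fixes all unit vectors orthogonal to the line direction $L=T_O\beta$, and the projectivizations of these vectors fill an entire hyperplane $H\subset\rp^{n-1}$. Hence both $f=\mcr_{\alpha,\mcl}$ and $g=\mcr_{\Gamma,\mcl}$ fix $H$ \emph{pointwise}. A projective involution that fixes a hyperplane pointwise is a harmonic homology: it is diagonalizable with $(+1)$-eigenspace of codimension one, so its $(-1)$-eigenspace is a single point, its center. Thus $f$ and $g$ are the harmonic homologies with one and the same axis $H$ and respective centers $P_f,P_g\in\rp^{n-1}\setminus H$, and proving $f=g$ reduces to proving $P_f=P_g$.

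Next I would record the third-order coincidence of $f$ and $g$ along the Gauss image of $\gamma$. By (\ref{ot3}) of Proposition \ref{pasmul} one has $\vec n_\gamma(x_1)-\vec n_\beta(\zeta)=O(x_1^3)$ and $\vec n_\gamma(\hat x_1)-\vec n_\beta(\tilde\zeta)=O(x_1^3)$, while (\ref{gnfg}) gives $f(\vec n_\beta(\zeta))=\vec n_\beta(\tilde\zeta)$ and $g(\vec n_\gamma(x_1))=\vec n_\gamma(\hat x_1)$. Since $f$ is a projective, hence smooth, map near $\vec n(O)$, the first estimate yields
$$f(\vec n_\gamma(x_1))=f(\vec n_\beta(\zeta))+O(x_1^3)=\vec n_\beta(\tilde\zeta)+O(x_1^3),$$
and the second estimate gives $g(\vec n_\gamma(x_1))=\vec n_\gamma(\hat x_1)=\vec n_\beta(\tilde\zeta)+O(x_1^3)$. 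Subtracting,
$$g(\vec n_\gamma(x_1))-f(\vec n_\gamma(x_1))=O(x_1^3).$$

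Finally I would convert this coincidence into $P_f=P_g$. Choose an affine chart $(z,y)=(z,y_1,\dots,y_{n-2})$ centered at $\vec n(O)$ with $H=\{z=0\}$; transversality of the Gauss image $\vec n_\gamma$ to $H$ (established in the setup of Step~2) gives, after rescaling, $z(\vec n_\gamma(x_1))\simeq x_1$, so $\vec n_\gamma(x_1)=(x_1+O(x_1^2),\,y_\ast x_1+O(x_1^2))$ for some $y_\ast\in\rr^{n-2}$. Any harmonic homology with axis $\{z=0\}$ has the explicit fractional-linear form
$$(z,y)\longmapsto\left(\frac{-z}{1-2az},\ \frac{y-2bz}{1-2az}\right),$$
with $(a,b)\in\rr\times\rr^{n-2}$ encoding its center, so $f,g$ correspond to pairs $(a_f,b_f),(a_g,b_g)$, equal exactly when $f=g$. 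Expanding along $\vec n_\gamma(x_1)$, the $y$-component of $g(\vec n_\gamma(x_1))-f(\vec n_\gamma(x_1))$ begins with $-2(b_g-b_f)x_1$ and its $z$-component begins with $-2(a_g-a_f)x_1^2$. Hence the $O(x_1^3)$-coincidence forces first $b_f=b_g$ at order one and then $a_f=a_g$ at order two, i.e.\ $P_f=P_g$ and $f=g$.

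The main obstacle is this last, projective-geometric step, and the decisive simplification is the observation of the first paragraph: because $f$ and $g$ share the \emph{pointwise}-fixed axis $H$, they form only a two-parameter family whose parameters are already read off from the $1$- and $2$-jets of the restriction to any curve crossing $H$ transversally, which is precisely what makes the mild $O(x_1^3)$-coincidence sufficient. This is the higher-dimensional counterpart of Proposition \ref{jetneq}, where in the absence of a common axis one had to compare full $2$-jets at the fixed point; the asymptotic bookkeeping in the second paragraph is routine once Proposition \ref{pasmul} is in hand.
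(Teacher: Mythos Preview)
Your proof is correct and follows essentially the same approach as the paper: establish the $O(x_1^3)$ coincidence $g(\vec n_\gamma(x_1))-f(\vec n_\gamma(x_1))=O(x_1^3)$ from Proposition~\ref{pasmul}, observe that both $f$ and $g$ are harmonic homologies with the common pointwise-fixed axis $H$, and then argue that two such homologies that agree to order $x_1^2$ along a curve transversal to $H$ must coincide. The only difference is cosmetic: where the paper splits into the cases $\Lambda_f(0)\neq\Lambda_g(0)$ and $\Lambda_f(0)=\Lambda_g(0)$ (the latter invoking Proposition~\ref{jetneq}), you instead write down the explicit fractional-linear form of a harmonic homology and read off the center parameters $(a,b)$ directly from the first- and second-order terms---your $b_f\neq b_g$ is exactly the paper's Case~1 and $b_f=b_g,\ a_f\neq a_g$ is its Case~2.
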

  
  \begin{proof} One has 
    \begin{equation} g(\vec n_{\gamma}(x_1))-f(\vec n_{\gamma}(x_1))=O(x_1^3), \ \text{ as } x_1\to0.\label{o3x}
  \end{equation}
This follows from the fact that the  points $\vec n_\gamma(x_1)$, $\vec n_\beta(\zeta)$ 
are $O(x_1^3)$-close, as are their $g$- and $f$- images respectively, see (\ref{ot3}) and 
(\ref{gnfg}), 
 and smoothness of the maps $f$ and $g$. On the other hand, we prove the following claim. 

{\bf Claim 4.} {\it If $f\neq g$, then  $g(\vec n_{\gamma}(x_1))-f(\vec n_{\gamma}(x_1))$ is not $O(x_1^3)$, as $x_1\to0$.} 

\begin{proof} The projective involutions $f$ and $g$  have common hyperplane $H$ of fixed points, which 
  is the projectivization of the codimension one vector subspace $\{ x_1=0\}$ orthogonal to $L$. 
  Each involution $v=f,g$ has yet another fixed point $P_v\notin H$, fixes each line through $P_v$ and 
  acts there as a projective involution fixing $P_v$ and its intersection point with $H$. 
  Inequality $f\neq g$ is equivalent to the inequality $P_f\neq P_g$. 
  
  Let us identify $O$ with the point in $\rp^{n-1}$ represented by the  vector 
  $\vec n_\beta(O)=\vec n_\gamma(O)$ and introduce  affine coordinates $(z,y)$, $y=(y_1,\dots,y_{n-2})$, centered at $O$ on an affine chart in $\rp^{n-1}$ so that 
  $H=\{ z=0\}$ and along the curves $\vec n_{\gamma}(x_1)$ and $\vec n_{\beta}(x_1)$ one has $z\simeq x_1$, as 
  $x_1\to0$. One has 
  \begin{equation} z\circ g(\vec n_\gamma(x_1))\simeq z\circ f(\vec n_\gamma(x_1))\simeq 
  -z(\vec n_\gamma(x_1))\simeq -x_1, \ \text{ as } x_1\to0,\label{asx1}\end{equation} 
   since $g$ and $f$ are involutions fixing points in $H$ and the curve $\vec n_\gamma(x_1)$ is transversal 
  to $H$. For every $x_1$ small enough the images 
  $g(\vec n_\gamma(x_1))$, $f(\vec n_\gamma(x_1))$ 
  lie in two  lines $\La_g(x_1)$ and $\La_f(x_1)$ passing through $\vec n_\gamma(x_1)$ and $P_g$  
  (respectively, $\vec n_\gamma(x_1)$ and $P_f$). The difference between the $z$-coordinates of each image 
  and of its source point     is asymptotic to $-2x_1$, by (\ref{asx1}). 
  
  Case 1): $\La_f(0)\neq\La_g(0)$. Then the projections of  the images 
  $g(\vec n_\gamma(x_1))$, $f(\vec n_\gamma(x_1))$ to the hyperplane $H$ along the $z$-axis differ by a quantity of order of $z(\vec n_\gamma(x_1))\simeq x_1$ up to non-zero constant factor. 
  Hence, the images do not differ by $O(x_1^3)$.  
  
  Case 2): $\La_f(0)=\La_g(0)=\La$. We choose the above coordinates $(z,y)$ so that in 
  addition, the line $\La$ is the $z$-axis. 
     Both projective involutions $f$ and $g$ fix the line $\La$, 
  and in the $z$-coordinate   on $\La$ one has 
  \begin{equation} f(z)-g(z)=az^2(1+o(1)), \ \ \text{ as } z\to0; \ \ \ a\neq0,\label{fgz1}
  \end{equation}
  by Proposition \ref{jetneq}. On the other hand, the functions $\Phi_1:=z\circ f$ and 
  $\Phi_2:=z\circ g$ restricted to a small neighborhood of the point $O$ equipped with the 
  coordinates $z=(z,y)$ have difference with 
  the asymptotics 
  \begin{equation} \Phi_2(z)-\Phi_1(z)=\chi(y)z^2+o(z^2), \ \ \text{ as } z\to0,
  \label{cz2}\end{equation}
  where $\chi(y)$ is an analytic function. One has $\chi(0)=a\neq0$, by 
  (\ref{fgz1}). This implies that along the curve $\vec n_\gamma(x_1)$ the functions 
  $\Phi_1$ and $\Phi_2$ differ by a quantity $ax_1^2(1+o(1))\neq O(x_1^3)$. Therefore, 
  the images $f(\vec n_\gamma(x_1))$ and $g(\vec n_\gamma(x_1))$ do not differ by 
  a quantity $O(x_1^3)$. This proves Claim 4.
  \end{proof}
  
  Claim 4 together with (\ref{o3x}) imply that $f=g$. Proposition \ref{pro=} is proved.
  \end{proof}
  
  Let $c$ be the  constant factor in the right-hand side in (\ref{psizg}). Let $P$ be the fixed point 
  of the involution $f=g$ that does not lie in $H$. 
  
  {\bf Claim 5.} {\it Choosing appropriate coordinates $(z,y)$ as at the beginning of the proof of 
  Claim 4 one can achieve that the projection $(z,y)\mapsto y$ is the projection 
  $\rp^{n-1}\setminus\{ P\}\to H$ from the point $P$ and the following asymptotic equalities hold:}
  \begin{equation}z(\vec n_\beta(\zeta))-z(\vec n_\gamma(x_1))=4cx_1^4(1+o(1)),\label{zvn1}
  \end{equation}
  \begin{equation} z(\vec n_\beta(\tilde\zeta))-z(\vec n_\gamma(\hat x_1))=4cx_1^4(1+o(1))
  \ \text{ as } x_1\to0.\label{zvn2}\end{equation}
  \begin{proof} Fix some 
  affine coordinates $y=(y_1,\dots,y_{n-2})$ on $H$ centered at $O$. 
  We extend the coordinate vector function $y$ to all of $\rp^n\setminus\{ P\}$ 
  as the projection $\rp^n\setminus\{ P\}\to H_{y_1,\dots,y_{n-2}}$ 
  from the point $P$.  Let $(x_1,\dots,x_n)$ be the orthogonal coordinates on $\rr^n$ introduced above. For every vector $\vec n\in\rr^n$ set 
$$z(\vec n)=-\frac{x_1(\vec n)}{x_n(\vec n)}.$$
The map $\vec n\mapsto z(\vec n)$ is a projection $\rp^{n-1}\setminus V\to\rp^1_{[x_1:x_n]}$ 
from the codimension two 
projective subspace  $V=\{ x_1=x_n=0\}\subset H\setminus\{ O\}$. (Recall that the latter $O$ is identified with 
$\vec n_\gamma(O)=\vec n_{\beta}(O)$. One has $\vec n_\beta(O)\notin V$, since the vector 
$\vec n_\beta(O)$ is not orthogonal to the $(x_1,x_n)$-coordinate plane $\Pi$: it is orthogonal to the hyperplane 
$T_O\alpha=T_O\Gamma$ transversal to $\Pi$.)  One has $z|_H\equiv 0$, 
by construction, since $H$ is the projectivization of the $(n-1)$-dimensional subspace $\{ x_1=0\}$ 
orthogonal to the line $L$: the $x_1$-axis in the orthogonal coordinates $x_1,\dots,x_n$. 
Afterwards formulas (\ref{zvn1}) and (\ref{zvn2}) follow from formulas (\ref{psizg}) 
and (\ref{psizg'}):  the $z$-coordinates in (\ref{zvn1}) and (\ref{zvn2}) are equal to the tangents 
of   the arguments  in (\ref{psizg}) and (\ref{psizg'}) with added $\frac\pi2$, and $\tan \phi\simeq\phi$, 
as $\phi\to0$.  The claim is proved.
\end{proof}

\begin{proof} {\bf of Lemma  \ref{conjmany2}.} One has $f=g$ (Proposition \ref{pro=}), hence, 
\begin{equation}\vec n_\gamma(\hat x_1)=f(\vec n_\gamma(x_1)), \ 
\vec n_\beta(\tilde\zeta) =f(\vec n_\beta(\zeta)),\label{vfvf}\end{equation}
by definition.

{\bf Claim 6.} {\it Let $(z,y)$ be the coordinates as in Claim 5. Then} 
\begin{equation}z(f(\vec n_\beta(\zeta)))-z(f(\vec n_\gamma(x_1)))=
-(z(\vec n_\beta(\zeta))-z(\vec n_\gamma(x_1)))+O(x_1^5).\label{zff1}\end{equation}

\begin{proof} Each pair "image-preimage" in (\ref{vfvf}) lies in one and the same $z$-fiber, 
since each $z$-fiber is  $f$-invariant (it contains the fixed point $P$ of the involution $f$). The  $y$-coordinates (projections to $H$) of the latter $z$-fibers are $O(x_1^3)$-close to each other: 
$y(\vec n_\gamma(x_1))-y(\vec n_\beta(\zeta))=O(x_1^3)$, by (\ref{ot3}). On the other hand, 
the  restrictions to the 
$z$-fibers of the involution $f$ (written in the coordinate $z$) are of the form $f(z)=-z+\sum_{j=2}^5b_j(y)z^j+o(z^5)$, as $z\to0$, where $b_j(y)$ are analytic functions on a neighborhood of the point $O$.  This and the above statements together imply that 
the restrictions of the projective involution $f$ to the  $z$-fibers containing $\vec n_\gamma(x_1)$ 
and $\vec n_\beta(\zeta)$  differ by a quantity 
$O(z^2x_1^3)$, as $z$ and $x_1$ tend to zero. Let us introduce the auxiliary point 
$q(x_1)$ that lies in the same $z$-fiber, as $\vec n_\gamma(x_1)$, but
 has the same $z$-coordinate, as the other preimage $\vec n_\beta(\zeta)$.  One has 
 \begin{equation}z\circ f(\vec n_\beta(\zeta))-z\circ f(q(x_1))=O(x_1^5),\label{zcf5}\end{equation}
 by the previous statement, since 
$z(\vec n_\gamma(x_1))\simeq z(\vec n_\beta(x_1))=O(x_1)$, and hence, $O(z^2x_1^3)=O(x_1^5)$. 
On the other hand, 
$$z(q(x_1))-z(\vec n_\gamma(x_1))=z(\vec n_\beta(\zeta))-z(\vec n_\gamma(x_1))=
4cx_1^4(1+o(1)),$$
by (\ref{zvn1}). Thus, the points $q(x_1)$ and 
$\vec n_\gamma(x_1)$   lie in the same $z$-fiber and 
have $O(x_1^4)$-close $z$-coordinates that are $O(x_1)$. 
Therefore, the difference of the $z$-coordinates of their $f$-images is equal to minus the difference of their own $z$-coordinates times a quantity $1+O(x_1)$, and $O(x_1)$ times $O(x_1^4)$ yields 
$O(x_1^5)$. This implies (\ref{zff1}) with $\vec n_\beta(\zeta)$ replaced by 
$q(x_1)$. Together with (\ref{zcf5}), this implies (\ref{zff1}). Claim 6 is proved.
\end{proof}

The right-hand side in (\ref{zff1}) is $-4cx_1^4(1+o(1))$, by (\ref{zvn1}). 
But its left-hand side is equal to 
the left-hand side in (\ref{zvn2}), and hence, is 
$4cx_1^4(1+o(1))$, by (\ref{zvn2}). The contradiction thus obtained proves Lemma \ref{conjmany2}.
\end{proof}

Theorem  \ref{conjmany} is proved, since it follows from Lemma \ref{conjmany2}.

\subsection{Proof that the hypersurface is a quadric}

\begin{theorem} \label{tsect} \cite[theorem 4]{art} 
Let a hypersurface in $\rr^n$ be such that each its planar section is a conic. 
Then it is a quadric. 
\end{theorem}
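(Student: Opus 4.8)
The hypothesis that every planar section of $\alpha$ is a conic is invariant under affine (indeed projective) transformations and is inherited by slices: if $W\subset\rr^n$ is any three-dimensional affine subspace meeting $\alpha$ transversally, then every $2$-plane $\Pi\subset W$ cuts $S_W:=W\cap\alpha$ in the curve $\Pi\cap\alpha$, which is a conic by hypothesis. Thus each slice $S_W$ is a surface in $W\cong\rr^3$ all of whose planar sections are conics. The plan is therefore to isolate the genuine difficulty in the case $n=3$, prove there that such a surface lies on a quadric, and then reassemble the family of slice-quadrics into a single quadric containing $\alpha$.

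\textbf{The core case $n=3$.} Let $S\subset\rr^3$ be a smooth connected surface whose every planar section is a conic. The first, cheap observation is that any line $L$ lies in some plane $\Pi$ with $\Pi\cap S$ a conic, and a line meets a conic in at most two points; hence $L\cap S$ has at most two points, so $S$ is already ``of degree $2$'' in the set-theoretic sense. To produce a candidate quadric I fix a generic plane $\Pi_0$ with section conic $C_0=\Pi_0\cap S$. Since the quadrics in $\rr^3$ form a nine-parameter family and containing the plane conic $C_0$ is a system of linear conditions, one can select a quadric $Q$ that contains $C_0$, passes through finitely many further points of $S$ lying off $\Pi_0$, and — using an osculating-quadric construction in the spirit of Proposition~\ref{piii} together with the positive second fundamental form — is tangent to $S$ along a reference arc of $C_0$.

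\textbf{Propagation.} It remains to show $S\subset Q$, and the engine is the classical fact that a conic is determined by five conditions (points, with tangencies counted with multiplicity). For a plane $\Pi$ transversal to $S$ and meeting $\Pi_0$ along a chord of $C_0$, the two conics $\Pi\cap S$ and $\Pi\cap Q$ already share the two points of $\Pi\cap C_0$, and where $S$ and $Q$ are tangent along $C_0$ these shared points carry first-order contact as well. Moving $\Pi$ in a connected family, tracking the shared points, and accumulating the contact data supplied along $C_0$, one forces $\Pi\cap S=\Pi\cap Q$ for all nearby $\Pi$, so that agreement spreads from $C_0$ over a full neighbourhood of $C_0$ in $S$. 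Connectedness of $S$, together with analytic continuation (the entire configuration varies analytically and the property is projectively invariant), then propagates the coincidence to all of $S$, giving $S\subset Q$.

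\textbf{Reassembly in $\rr^n$ and the main obstacle.} For $n\ge 4$ every three-dimensional slice $W\cap\alpha$ now lies on a quadric $Q_W\subset W$. On the overlap of two slices $W,W'$ — an affine subspace of dimension at most two — both $Q_W$ and $Q_{W'}$ restrict to the \emph{same} planar section of $\alpha$, hence to the same conic; so the slice-quadrics are mutually compatible. Since the restriction map from quadratic polynomials on $\rr^n$ to quadratic polynomials on any three-dimensional subspace is surjective, this compatibility lets one fit the $Q_W$ together into a single quadratic polynomial vanishing on $\alpha$, i.e. a quadric containing $\alpha$. The delicate point of the whole argument — and the step I expect to be the main obstacle — is the propagation in the $n=3$ case: one must guarantee that the moving section conic $\Pi\cap S$ and the fixed conic $\Pi\cap Q$ always meet in \emph{five} conditions (points counted with the contact accumulated along $C_0$), uniformly as $\Pi$ ranges over a connected family, and one must separately dispose of the degenerate configurations where $\Pi$ becomes tangent to $C_0$ or the section conic degenerates into a pair of lines.
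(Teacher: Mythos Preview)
The paper does not prove this statement: Theorem~\ref{tsect} is quoted from Arnold--Tabachnikov \cite[theorem~4]{art} and used as a black box in Subsection~3.3. There is no in-paper argument to compare your proposal against.

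On its own merits, your $n=3$ step has a genuine gap that precedes the obstacle you flagged. You want a quadric $Q$ that (a) contains the conic $C_0=\Pi_0\cap S$, (b) passes through some further points of $S$, and (c) is \emph{tangent to $S$ along a reference arc of $C_0$}. But (c) is infinitely many linear conditions --- one per point of the arc --- while the quadrics in $\rr^3$ containing $C_0$ form only a four-parameter linear system. So unless tangency along $C_0$ becomes automatic once imposed at finitely many points (and nothing in your setup forces that), no such $Q$ exists. Proposition~\ref{piii} does not help here: it manufactures a quadric with second-order contact at a \emph{single} point $O$ together with a $2$-jet matching of the normal field along the section curve, which is a finite package of conditions and is not tangency along an arc. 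Without (c), the propagation collapses at the first step: a plane $\Pi$ through a chord of $C_0$ contributes only the two shared points $\Pi\cap C_0$ to both conics $\Pi\cap S$ and $\Pi\cap Q$ --- two conditions, not five --- and there is no mechanism to upgrade them to tangential contact. You correctly identify the five-condition count as the main obstacle; the point is that, as written, you never reach five.

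Two smaller remarks. First, the theorem as stated carries no curvature hypothesis, so invoking a positive definite second fundamental form imports an assumption not in the statement (harmless for the application in the paper, but worth noting). Second, your reassembly in $\rr^n$ needs (i) uniqueness of $Q_W$ for each $3$-plane $W$ (true, since a surface patch over-determines a quadric in $\rr^3$), and (ii) an argument that compatibility on $2$-plane overlaps pins down a single global quadratic polynomial; the surjectivity of the restriction map that you cite points in the wrong direction for a gluing argument.
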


Theorem \ref{conjmany} implies that each planar section of the hypersurface $\alpha$ is 
a conic. This together with Theorem \ref{tsect} proves Theorem \ref{prodim2'}.

\section{Acknowledgements}
I wish to thank Anastasiia Sharipova and Sergei Tabachnikov for introducing me to the area of Hamiltonian, Finsler and projective billiards. I wish to thank them and Vladimir Matveev for helpful discussions. I wish to thank the referee for helpful remarks.

\end{document}